\newcommand{\eps}{\varepsilon}
\newcommand{\ov}{\overline}
\newcommand{\id}{\textnormal{id}}
\newcommand{\mc}{\mathcal}
\newcommand{\mrm}{\mathrm}
\newcommand{\msf}{\mathsf}
\newcommand{\I}{\mathbbm{1}}
\newcommand{\vp}{\varphi}
\newcommand{\cst}{\ifmmode \mathrm{C}^* \else $\mathrm{C}^*$\fi}
\newcommand{\la}{\langle}
\newcommand{\ra}{\rangle}
\newcommand{\bbGamma}{{\mathpalette\makebbGamma\relax}}
\newcommand{\makebbGamma}[2]{%
  \raisebox{\depth}{\scalebox{1}[-1]{$\mathsurround=0pt#1\mathbb{L}$}}%
}
\newcommand{\NN}{\mathbb{N}}
\newcommand{\CC}{\mathbb{C}}
\newcommand{\GG}{\mathbb{G}}
\newcommand{\wot}{\ifmmode \textsc{wot} \else \textsc{wot}\fi}
\newcommand{\sot}{\ifmmode \textsc{sot} \else \textsc{sot}\fi}
\newcommand{\sots}{\ifmmode \textsc{sot}^* \else \textsc{sot}$^*$\fi}
\newcommand{\ssot}{\ifmmode \sigma\textsc{-sot} \else $\sigma$-\textsc{sot }\fi}
\newcommand{\ssots}{\ifmmode \sigma\textsc{-sot}^* \else $\sigma$-\textsc{sot }$^*$\fi}
\newcommand{\swot}{\ifmmode \sigma\textsc{-wot} \else $\sigma$-\textsc{wot}\fi}
\newcommand{\Linfd}{\operatorname{L}^{\infty}(\wh\bbGamma)}
\newcommand{\Ljd}{\operatorname{L}^{1}(\wh\bbGamma)}
\newcommand{\CGd}{\mathrm{C}(\wh\bbGamma)}
\newcommand{\wh}{\widehat}
\newcommand{\wc}{\widecheck}
\newcommand{\oon}{\operatorname}
\renewcommand{\restriction}{\mathord{\upharpoonright}}
\newcommand{\rest}{\restriction}
\DeclareMathOperator{\lin}{span}
\DeclareMathOperator{\Irr}{Irr}
\DeclareMathOperator{\Pol}{Pol}
\DeclareMathOperator{\Tr}{Tr}
\DeclareMathOperator{\B}{B}
\DeclareMathOperator{\A}{A}
\DeclareMathOperator{\M}{M}
\DeclareMathOperator{\LL}{L}
\DeclareMathOperator{\SU}{SU}
\DeclareMathOperator{\CB}{CB}
\newtheorem{theorem}{Theorem}[section]
\newtheorem{proposition}[theorem]{Proposition}
\newtheorem{lemma}[theorem]{Lemma}
\theoremstyle{definition}
\newtheorem{corollary}[theorem]{Corollary}
\newtheorem{remark}[theorem]{Remark}
\newtheorem{definition}[theorem]{Definition}
\newtheorem{example}[theorem]{Example}
\numberwithin{equation}{section}
\begin{document}
\title{Cowling-Haagerup constant of the product of discrete quantum groups}

\author{Jacek Krajczok}
\address{Vrije Universiteit Brussel\\
Pleinlaan 2\\
1050 Brussels\\
Belgium
}
\email{jacek.krajczok@vub.be}

\thanks{}

\subjclass[2020]{Primary 46L67, Secondary 22D55, 47L25} 


\keywords{Weak amenability, discrete quantum group}

\date{}

\begin{abstract}
We show that (central) Cowling-Haagerup constant of discrete quantum groups is multiplicative $\Lambda_{cb}(\bbGamma_1\times \bbGamma_2)=\Lambda_{cb}(\bbGamma_1)\,\Lambda_{cb}(\bbGamma_2)$, which extends the result of Freslon \cite{FreslonPermanence} to general (not necesarilly unimodular) discrete quantum groups. The crucial feature of our approach is considering algebras $\CGd,\Linfd$ as operator modules over $\Ljd$.
\end{abstract}

\maketitle

\section{Introduction}

Weak amenability is an approximation property introduced in the context of locally compact groups by Cowling and Haagerup in \cite{CowlingHaagerup}. It is weaker then amenability, but still quite strong as it implies Haagerup-Kraus approximation property (AP). A significant aspect of weak amenability is that it comes together with a quantifier: for any locally compact group one defines Cowling-Haagerup constant $\Lambda_{cb}(G)\in \left[1,+\infty\right]$ which is finite precisely when $G$ is weakly amenable. Authors of \cite{CowlingHaagerup, Haagerup} calculated this constant for all connected, non-compact, simple Lie groups with finite center. For example $\Lambda_{cb}(\oon{Sp}(1,n))=2n-1\,(n\ge 2)$ but if real rank of $G$ is greater than one, then $G$ is not weakly amenable and $\Lambda_{cb}(G)=+\infty$. Another important result tells that if $\Gamma$ is a lattice in $G$ then $\Lambda_{cb}(\Gamma)=\Lambda_{cb}(G)$, hence Cowling-Haagerup constant is a useful tool in telling apart discrete groups and their group \cst/von Neumann algebras. Cowling and Haagerup proved also that constant $\Lambda_{cb}$ is multiplicative, i.e. $\Lambda_{cb}(G\times H)=\Lambda_{cb}(G)\Lambda_{cb}(H)$ holds for any locally compact groups $G,H$ (\cite[Corollary 1.5]{CowlingHaagerup}).

One can extend the definition of weak amenability and Cowling-Haagerup constant to discrete or even general locally compact quantum groups (\cite[Definition 3.5]{Freslon}, \cite[Definition 5.12]{Brannan}, see also Definition \ref{def1}). This property has received a lot of attention -- let us mention that it is known that strong amenability (i.e.~coamenability of the dual) implies weak amenability which in turn implies AP, weak amenability with $\Lambda_{cb}=1$ is preserved under taking free products of discrete quantum groups \cite{FreslonFreeProducts} and quantum groups such as $\wh{O_F^+}, \wh{U_F^+}$ or $\SU_q(1,1)_{ext}$ are weakly amenable with Cowling-Haagerup constant equal $1$ (\cite[Theorem 24]{CCAP}, \cite[Theorem 7.4]{Caspers}). It is however an open question whether amenability implies weak amenability (in fact it is not known whether amenability implies AP, see \cite[Corollary 7.4]{CrannInner}. These implications are known to be true in discrete case by \cite[Theorem 3.8]{Tomatsu}). Freslon in \cite[Proposition 3.2]{FreslonPermanence} proved that weak amenability passes to products of discrete quantum groups, but so far the best information on the value of Cowling-Haagerup constant were the bounds $\max(\Lambda_{cb}(\bbGamma_1),\Lambda_{cb}(\bbGamma_2))\le \Lambda_{cb}(\bbGamma_1\times \bbGamma_2)\le \Lambda_{cb}(\bbGamma_1)\Lambda_{cb}(\bbGamma_2)$. In Theorem \ref{thm3} we will show that the upper bound $\le $ is in fact always an equality. Example \ref{ex1} shows why this knowledge can make a qualitative difference.

For discrete quantum groups there is a close connection between properties of quantum group $\bbGamma$ and its operator algebras $\CGd,\Linfd$. For example, weak amenability of $\bbGamma$ implies that $\CGd$ has completely bounded approximation property, $\Linfd$ has weak$^*$ completely bounded approximation property and there is a bound on respective constants (see \cite[Theorem 6.6]{Brannan} and references therein). The converse holds under unimodularity assumption (\cite[Theorem 5.14]{KrausRuan}) and in this case all the involved constants are equal (see also \cite[Proposition 4.7]{CrannInner} for a related result). Whether this converse and its variants for strong amenability and AP hold in general, is a major open problem (\cite[Remark 6.9]{Brannan}). The main reason why in general it is difficult to deduce a property of $\bbGamma$ from properties of $\CGd,\Linfd$ is the lack of averaging which exists in unimodular (dually - Kac type) case, and allows one to turn a CB map into a multiplier (see \cite[Section 7.1]{Brannan} and \cite[Section 7.1]{Averaging}). As Freslon notes in \cite[Remark 3.3]{FreslonPermanence}, in the unimodular case we can use equality $\Lambda_{cb}(\bbGamma)=\Lambda_{cb}(\CGd)$ to deduce that Cowling-Haagerup constant is multiplicative using \cite[Theorem 12.3.13]{BrownOzawa}. This result states that Cowling-Haagerup constant of \cst-algebras is multiplicative with respect to minimal tensor product. In general however this approach does not work, as it is not known whether $\Lambda_{cb}(\bbGamma)\le \Lambda_{cb}(\mrm{C}(\wh\bbGamma))$.
One way of remeding this situation is to look at $\CGd, \Linfd$ not only as at \cst/von Neumann algebras, but consider them together with extra structure. This approach already turned out to be quite fruitful and lead to several results concerning amenability -- injectivity (see \cite[Theorem 3]{SoltanViselter} and \cite[Theorem 5.1]{Crann}), AP -- weak$^*$ OAP (\cite[Theorem 6.16]{DKV_ApproxLCQG}) or strong amenability -- weak$^*$ CPAP (\cite[Theorem 6.11]{KrajczokPhD}).

In our work we take a similar point of view, and look at $\CGd,\Linfd$ as $\Ljd$-modules. In Definition \ref{def1} we introduce respective Cowling-Haagerup-like constants and in Theorem \ref{thm3} show that they are equal to the analogous constants for $\bbGamma$. In Section \ref{sec:main} we show that such Cowling-Haagerup constant for operator modules of the form $\CGd$ is multiplicative (Proposition \ref{prop2}). Its proof is a modification of the proof of \cite[Theorem 12.3.13]{BrownOzawa}. The main difference is that we take also the module structure into account (see also remarks \ref{rem1}, \ref{rem2}).

Apart from weak amenability of discrete quantum groups, we are also interested in its central variation (see Definition \ref{def1}). To study this property, we will look at $\CGd,\Linfd$ as $\Ljd$-bimodules.

\section{Preliminaries and notation}\label{sec:preliminaries}

In this section we will briefly recall the necessary operator space and quantum group background. We refer to \cite{BlecherMerdy, Crann, EffrosRuan}, \cite{bmt, Brannan, DKV_ApproxLCQG, KustermansVaesVN, NeshveyevTuset, PodlesWoronowicz, cqg} and references therein for more information.\\

Completely contractive Banach algebra is an associative algebra $A$ which is at the same time an operator space and the multiplication map extends to a complete contraction $A\wh\otimes A\rightarrow A$, where $\wh\otimes $ is the projective tensor product of operator spaces. We say that an operator space $X$ is a left operator $A$-module, if it is a left module over $A$ and the action extends to a complete contraction $A\wh\otimes X\rightarrow X$. Since this is the only type of modules we consider, we will simply say that $X$ is a left $A$-module. In a similar way we define right $A$-modules and $A$-$B$-bimodules. By definition, an $A$-bimodule is an $A$-$A$-bimodule. Note that every operator space or module can be considered as a bimodule by setting $A=\CC$, $B=\CC$ or both. Furthermore, if $A,B$ are completely contractive Banach algebras, then so is $A\wh\otimes B$.

The operator space of completely bounded (CB) maps between two operator spaces $X,Y$ will be denoted by $\CB(X,Y)$. If $X,Y$ are left $A$-modules, then the closed subspace consisting of left $A$-module maps will be denoted by ${}_A\CB(X,Y)$. Similarly we define the space of right $A$-module maps $\CB_A(X,Y)$ and $A$-$B$-bimodule maps ${}_A\CB_B(X,Y)$. The CB norm will be denoted by $\|\vp\|_{\CB(X,Y)}$ or simply $\|\vp\|_{cb}$.

If $A$ is a completely contractive Banach algebra and $X$ is a left $A$-module, then the dual operator space $X^*$ becomes canonically a right $A$-module with action defined by $\la\omega a,x\ra=\la\omega,ax\ra$. Similarly for right modules and bimodules. The canonical pairing between $X^*$ and $X$ will be denoted simply by $\la\omega,x\ra$ or $\la \omega,x\ra_{X^*,X} $ if we want to indicate which spaces are involved. Pairing gives rise to canonical complete contraction $\kappa\colon X\wh\otimes X^*\rightarrow \CC$.

Let $X,Y$ be operator spaces, $X$ a right $A$-module, and $Y$ a left $A$-module. Then we can form the $A$-module tensor product $X\wh\otimes_A Y$, which by definition is given by the quotient operator space
\[
X \wh\otimes_A Y=(X\wh\otimes Y)/\ov{\lin} \{ xa\otimes y - x\otimes ay\,|\, x\in X, a\in A, y\in Y\}.
\] 
By an abuse of notation, the quotient map will be denoted by $q\colon X\wh\otimes Y\rightarrow X\wh\otimes_A Y$. A result which will be very useful, is that in this situation $\CB_A(X,Y^*)\simeq (X\wh\otimes_A Y)^*$ completely isometrically, where $\vp\in \CB_A(X,Y^*)$ corresponds to the functional $q(x\otimes y)\mapsto \la \vp(x),y\ra $ (\cite[Proposition 3.5.9]{BlecherMerdy}). Similarly $\CB(X,Y^*)\simeq (X\wh\otimes Y)^*$ completely isometrically. In this way both $\CB_A(X,Y^*)$ and $\CB(X,Y^*)$ are dual operator spaces and have the corresponding weak$^*$ topologies. In particular, one can restrict weak$^*$ topology from $\CB(X,Y^*)$ to $\CB_A(X,Y^*)$. One easily checks that both topologies on $\CB_A(X,Y^*)$ agree and $\CB_A(X,Y^*)$ is weak$^*$ closed in $\CB(X,Y^*)$.

If $A$ is a completely contractive Banach algebra, then so is $A^{op}$ ($A^{op}$ by definition has the same operator space structure, but opposite multiplication). Then any left $A$-module becomes right $A^{op}$-module and vice versa. Furthermore, if $X$ is a $A$-$B$-bimodule then it is a right $A^{op}\wh\otimes B$-module, with module structure $x(a^{op}\otimes b)=axb$. One immediately sees that ${}_A\CB_B(X,Y)\subseteq \CB_{A^{op}\wh\otimes B}(X,Y)$ for any $A$-$B$-bimodules $X,Y$ (see also Remark \ref{rem3}). Let us also recall that for any finite dimensional operator space $E$, the canonical map $E\rightarrow E^{**}$ establishes a completely isometric isomorphism.\\

In this work we will be interested only in compact or discrete quantum groups. Readers interested in general framework are referred to \cite{KustermansVaesVN}. Compact quantum group $\GG$ is defined by a unital \cst-algebra $\mrm{C}(\GG)$ and a unital $*$-homomorphism $\Delta\colon \mrm{C}(\GG)\rightarrow \mrm{C}(\GG)\otimes \mrm{C}(\GG)$ called comultiplication, which satisfies certain conditions. Under separability assumption Woronowicz \cite{cqg} (and van Daele \cite{VanDaeleHaar} in general) proved that there exists a unique state $h\in \mrm{C}(\GG)^*$ (called Haar integral) which is bi-invariant. We will assume that it is faithful, i.e.~we work at the reduced level (see \cite{bmt}). Performing GNS representation, we obtain Hilbert space $\LL^2(\GG)$, faithful representation of $\mrm{C}(\GG)$ and after taking {\sot}-closure, von Neumann algebra $\LL^{\infty}(\GG)$. Both $h$ and $\Delta$ extend to normal maps on $\LL^{\infty}(\GG)$. The predual of $\LL^{\infty}(\GG)$ will be denoted by $\LL^1(\GG)$. The predual mapping of $\Delta$ gives a completely contractive Banach algebra structure on $\LL^1(\GG)$:
\[
\LL^1(\GG)\wh\otimes \LL^1(\GG)\ni \omega\otimes \nu\mapsto\omega\star\nu=(\omega\otimes\nu)\Delta\in \LL^1(\GG).
\]
It is not difficult to check that both $\mrm{C}(\GG)$ and $\LL^{\infty}(\GG)$ are $\LL^1(\GG)$-bimodules with respect to actions $\omega\star x=(\id\otimes \omega)\Delta(x)$, $x\star \omega = (\omega\otimes \id)\Delta(x)$ for $\omega\in \LL^1(\GG)$ and $x\in \mrm{C}(\GG)$ or $x\in \LL^{\infty}(\GG)$. Representation theory of compact quantum groups resembles the one of compact qroups. In particular, every irreducible representation is finite dimensional. Let $\Irr(\GG)$ be the set of their equivalence classes. For each class $\alpha\in\Irr(\GG)$ we choose its representative $U^{\alpha}$ which acts on a Hilbert space $\msf{H}_\alpha$ of dimension $\dim(\alpha)$. In each $\msf{H}_\alpha$ choose an orthonormal basis $\{\xi^\alpha_i\}_{i=1}^{\dim(\alpha)}$ in which operator $\uprho_\alpha$ is diagonal (see \cite[Section 1.4]{NeshveyevTuset}), with eigenvalues $\uprho_{\alpha,i}\,(1\le i\le \dim(\alpha))$. Number $\Tr(\uprho_\alpha)$ is called the quantum dimension of $\alpha$ and is denoted $\dim_q(\alpha)$. The space $\Pol(\GG)$ spanned by coefficients $U^{\alpha}_{i,j}=(\id\otimes \omega_{\xi^\alpha_i,\xi^\alpha_j})U^{\alpha}\,(1\le i,j\le\dim(\alpha))$, together with restricted comultiplication, is a unital Hopf $*$-algebra. It is norm dense in $\mrm{C}(\GG)$, hence weak$^*$ dense in $\LL^{\infty}(\GG)$. 

By definition, any discrete quantum group $\bbGamma$ is a dual of compact quantum group $\GG$: $\bbGamma=\wh{\GG}$ (thus also $\GG=\wh\bbGamma$ -- we will prefer to look from discrete point of view). It comes together with \cst-algebra $\mrm{c}_0(\bbGamma)=\bigoplus_{\alpha\in\Irr(\wh\bbGamma)}\B(\msf{H}_\alpha)$ ($\mrm{c}_0$-direct sum), von Neumann algebra $\ell^{\infty}(\bbGamma)=\prod_{\alpha\in\Irr(\wh\bbGamma)}\B(\msf{H}_\alpha)$ and comultiplication $\Delta$. Consequently any element of $\ell^{\infty}(\bbGamma)$ is given by a family $(a_\alpha)_{\alpha\in\Irr(\wh\bbGamma)}$ of matrices in $\B(\msf{H}_\alpha)$. We will say that a net $(a_\lambda)_{\lambda\in \Lambda}$ converges pointwise to some $a$ in $\ell^{\infty}(\bbGamma)$ if and only if $a_{\lambda,\alpha}\xrightarrow[\lambda\in\Lambda]{}a_\alpha$ in $\B(\msf{H}_\alpha)$ for all $\alpha\in\Irr(\wh\bbGamma)$. The dense subspace consisting of families $(a_\alpha)_{\alpha\in\Irr(\wh\bbGamma)}$ such that $a_\alpha\neq 0$ for only finitely many $\alpha$'s, will be denoted by $\mrm{c}_{00}(\bbGamma)$. Another important subspace of $\ell^{\infty}(\bbGamma)$ is $\A(\bbGamma)$, the Fourier algebra of $\bbGamma$. It consists of elements of the form $\wh\lambda(\omega)$ with $\omega\in \LL^1(\wh\bbGamma)$ (see \cite[Section 4.2]{Brannan}, \cite[Section 3]{DKV_ApproxLCQG}). It is a subalgebra of $\mrm{c}_{0}(\bbGamma)$ and is itself a completely contractive Banach algebra with operator space structure given by completely isometric isomorphism $\A(\bbGamma)\ni \wh\lambda(\omega)\mapsto \omega\in\LL^1(\wh\bbGamma)$. A (left) completely bounded multiplier is an element $a\in \ell^{\infty}(\bbGamma)$ such that $a b\in \A(\bbGamma)$ for all $b\in \A(\bbGamma)$ and the associated map $\A(\bbGamma)\rightarrow \A(\bbGamma)$ is completely bounded. After composing with isomorphism $\A(\bbGamma)\simeq \LL^1(\wh\bbGamma)$ and taking the dual map, any such $a$ gives a normal CB map $\Theta^l(a)\in \CB^\sigma(\Linfd)$ (superscript $\sigma$ indicates that $\CB^\sigma(\Linfd)$ consists of normal CB maps). The space of completely bounded multipliers, equipped with the CB norm $\|a\|_{cb}=\|\Theta^l(a)\|_{cb}$, is denoted by $\M^l_{cb}(\A(\bbGamma))$. For example, any $\wh\lambda(\omega)\in \A(\bbGamma)$ is a left completely bounded multiplier with the associated map $\Theta^l(\wh\lambda(\omega))=(\omega\otimes\id)\wh\Delta$. Let us also note $\mrm{c}_{00}(\bbGamma)\subseteq \A(\bbGamma)$. For any $a\in \M^l_{cb}(\A(\bbGamma))$, we have $\Theta^l(a)\in {}_{\Ljd}\CB^\sigma(\Linfd)$, i.e.~$\Theta^l(a)$ is a normal, CB, left $\Ljd$-module map. By \cite[Corollary 4.4]{JungeNeufangRuan} (see also discussion in \cite[Section 3]{DKV_ApproxLCQG}) all maps on $\Linfd$ which satisfy these properties are of the form $\Theta^l(a)$ for some $a\in \M^l_{cb}(\A(\bbGamma))$. It is not difficult to check that $\Theta^l(a)$ restricts to $\Theta^l(a)\rest_{\mrm{C}(\wh\bbGamma)}\in {}_{\Ljd}\CB(\CGd)$. Using e.g.~\cite[Proposition 3.5]{DKV_ApproxLCQG} we again see that every CB, left $\Ljd$-module map on $\CGd$ is of the form $\Theta^l(a)\rest_{\mrm{C}(\wh\bbGamma)}$ for some $a\in \M^l_{cb}(\A(\bbGamma))$. Similarly, central multipliers $a\in \mc{Z}\M^l_{cb}(\A(\bbGamma))$ correspond to CB, $\Ljd$-bimodule maps on $\CGd$ and normal, CB, $\Ljd$-bimodule maps on $\Linfd$.

Whenever we have two compact quantum groups $\wh\bbGamma_1,\wh\bbGamma_2$, we can form their product $\wh\bbGamma=\wh\bbGamma_1\times \wh\bbGamma_2$. The associated algebras are $\CGd=\mrm{C}(\wh\bbGamma_1)\otimes\mrm{C}(\wh\bbGamma_2)$, $\LL^{\infty}(\wh\bbGamma)=\LL^{\infty}(\wh\bbGamma_1)\bar\otimes\LL^{\infty}(\wh\bbGamma_2)$ (hence $\LL^1(\wh\bbGamma)=\LL^1(\wh\bbGamma_1)\wh\otimes \LL^1(\wh\bbGamma_2)$), $\Pol(\wh\bbGamma)=\Pol(\wh\bbGamma_1)\odot\Pol(\wh\bbGamma_2)$ and the Haar integral is $h_{\wh\bbGamma}=h_{\wh\bbGamma_1}\otimes h_{\wh\bbGamma_2}$. We can also identify irreducible representations of $\wh\bbGamma$:
$\Irr(\wh\bbGamma)$ is the set of $\alpha\boxtimes \beta$ for $\alpha\in\Irr(\wh\bbGamma_1),\beta\in \Irr(\wh\bbGamma_2)$, where $U^{\alpha\boxtimes \beta}=U^{\alpha}_{13} U^{\beta}_{24}$ is a representation of $\wh\bbGamma$ on $\msf{H}_\alpha\otimes \msf{H}_\beta$. For details see \cite{Wang}. For finite subsets $F_1\subseteq\Irr(\wh\bbGamma_1),F_2\subseteq\Irr(\wh\bbGamma_2)$ denote $F_1\boxtimes F_2=\{\alpha\boxtimes\beta\,|\,\alpha\in F_1,\beta\in F_2\}$. Product of discrete quantum groups $\bbGamma_1$ and $\bbGamma_2$ is defined to be $\bbGamma_1\times \bbGamma_2=\bbGamma$, where $\bbGamma$ is the dual of $\wh\bbGamma$.

We will be using the following useful notation: if $\wh\bbGamma$ is an arbitrary compact quantum group and $\emptyset\neq F\subseteq \Irr(\wh\bbGamma)$ is a finite subset, set $\Pol_F(\wh\bbGamma)=\lin\{U^{\alpha}_{i,j}\,|\, \alpha\in F,1\le i,j\le \dim(\alpha)\}$ and consider it to be an operator space with structure coming from $\CGd$.  Next, for each $\alpha\in \Irr(\wh\bbGamma)$, let $p_\alpha$ be the central projection corresponding to $\B(\msf{H}_\alpha)\subseteq \ell^{\infty}(\bbGamma)$ and $p_F=\sum_{\alpha\in F}p_\alpha\in \mrm{c}_{00}(\bbGamma)$. Using orthogonality relations one easily sees that \begin{equation}\label{eq10}
 p_F=\wh\lambda(\omega_F),\quad\textnormal{ where }\quad
 \omega_F=\sum_{\alpha\in F} \sum_{i=1}^{\dim(\alpha)} \dim_q(\alpha) \uprho_{\alpha,i} h(U^{\alpha *}_{i,i}\cdot)\in \LL^1(\wh\bbGamma).
 \end{equation}
 Furthermore, $\Theta^l(p_F)$ is a projection onto $\Pol_F(\wh\bbGamma)$.\\

Symbol $\odot$ will denote the algebraic tensor product, $\otimes $ tensor product of Hilbert spaces or minimal (spatial) tensor product of \cst-algebras, $\bar\otimes$ von Neumann algebraic tensor product and $\wh\otimes$ projective tensor product of operator spaces. Operator spaces are assumed to be complete. All vector spaces are considered over $\CC$.

\section{Cowling-Haagerup constant for modules}
In this section we introduce a Cowling-Haagerup constant for (bi)modules $\CGd,\Linfd$, study its properties and relate it to the (central) Cowling-Haagerup constant of $\bbGamma$ (Theorem \ref{thm3}).

\begin{definition}\label{def2}
Let $\bbGamma$ be a discrete quantum group.
\begin{itemize}
\item Define ${}_{\Ljd}\Lambda_{cb}(\CGd)$ to be the infimum of all numbers $C\ge 1$ such that there is a net $(\vp_\lambda)_{\lambda\in \Lambda}$ of finite rank, left $\Ljd$-module CB maps on $\CGd$ with $\|\vp_{\lambda}\|_{cb}\le C$ and $\vp_{\lambda}(x)\xrightarrow[\lambda\in \Lambda]{} x$ for all $x\in \CGd$. If no such number exists, set ${}_{\Ljd} \Lambda_{cb}(\CGd)=+\infty$.
\item Similarly define $\Lambda_{cb,\Ljd}(\CGd)$ and ${}_{\Ljd}\Lambda_{cb,\Ljd}(\CGd)$ by considering right $\Ljd$-module maps and $\Ljd$-bimodule maps, respectively.
\item Define ${}_{\Ljd}\Lambda_{cb}(\Linfd)$ to be the infimum of all numbers $C\ge 1$ such that there is a net $(\psi_{\lambda})_{\lambda\in \Lambda}$ of normal, finite rank, left $\Ljd$-module CB maps on $\Linfd$ with $\|\psi_{\lambda}\|_{cb}\le C$ and $\psi_{\lambda}(x)\xrightarrow[\lambda\in \Lambda]{} x$ weak$^*$ for all $x\in \Linfd$. If no such number exists, set ${}_{\Ljd} \Lambda_{cb}(\Linfd)=+\infty$.
\item Similarly define $\Lambda_{cb,\Ljd}(\Linfd)$ and ${}_{\Ljd}\Lambda_{cb,\Ljd}(\Linfd)$ by considering right $\Ljd$-module maps and $\Ljd$-bimodule maps, respectively.
\end{itemize}
\end{definition}

Numbers ${}_{\Ljd}\Lambda_{cb}(\CGd)$, etc.~will be called Cowling-Haagerup constants. A standard argument (using a new net indexed over $\Lambda\times \NN$) shows that the infimum in the above definition is actually achievable. 

\begin{remark}\label{rem1}
In principle we could have introduced similar constants for arbitrary operator modules over completely contractive Banach algebras. We have decided not to do that, as general operator modules can fail to have any finite dimensional submodules, and also we were unable to prove that such constant is in general multiplicative (see Proposition \ref{prop2} and Remark \ref{rem2}).
\end{remark}

In our first proposition we show that it doesn't matter if we look at left or right module structure, and similarly it doesn't matter if we look at {\cst} or von Neumann level.

\begin{proposition}\label{prop1}
Let $\bbGamma$ be a discrete quantum group. Then
\[
{}_{\LL^1(\wh\bbGamma)}\Lambda_{cb}(\CGd)=\Lambda_{cb,\LL^1(\wh\bbGamma)}(\CGd)={}_{\LL^1(\wh\bbGamma)}\Lambda_{cb}(\Linfd)=\Lambda_{cb,\LL^1(\wh\bbGamma)}(\Linfd)
\]
and
\[
{}_{\LL^1(\wh\bbGamma)}\Lambda_{cb,\Ljd}(\CGd)={}_{\LL^1(\wh\bbGamma)}\Lambda_{cb,\Ljd}(\Linfd).
\]
\end{proposition}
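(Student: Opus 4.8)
The plan is to prove the chain of equalities by a combination of four implications: the two ``left $=$ right'' equalities, and the two ``$\mrm{C}$-level $=$ von Neumann level'' equalities; the bimodule statement will follow by the same arguments carried out equivariantly for the $\Ljd$-bimodule structure, i.e.\ by working with $\Ljd^{op}\wh\otimes\Ljd$-modules throughout. For the equality of left and right constants, the natural tool is the unitary antipode (or the adjoint $*$-operation composed with the comultiplication flip): the canonical anti-automorphism $R$ of $\Linfd$ intertwines the left and right $\Ljd$-actions in the sense that $R(\omega\star x)=R(x)\star(\omega\circ R)$, and $\omega\mapsto\omega\circ R$ is a completely isometric anti-isomorphism of $\Ljd$. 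Conjugating a left-module approximating net $(\vp_\lambda)$ by $R$ therefore produces a right-module approximating net with the same CB norms and the same finite-rank and (weak$^*$-)pointwise convergence properties, giving ${}_{\Ljd}\Lambda_{cb}\le\Lambda_{cb,\Ljd}$ and, by symmetry of $R$, the reverse.

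For the equality between the $\mrm{C}(\wh\bbGamma)$ and $\LL^\infty(\wh\bbGamma)$ constants, the inequality ${}_{\Ljd}\Lambda_{cb}(\Linfd)\le{}_{\Ljd}\Lambda_{cb}(\CGd)$ is the easy direction: given a net $(\vp_\lambda)$ of finite-rank left-module CB maps on $\CGd$, one wants to extend each $\vp_\lambda$ to a normal map on $\Linfd$. Here I would use the identification recalled in the preliminaries, namely that CB left $\Ljd$-module maps on $\CGd$ are exactly the multiplier maps $\Theta^l(a)\rest_{\CGd}$ with $a\in\M^l_{cb}(\A(\bbGamma))$, and that each such $a$ also induces a \emph{normal} left-module CB map $\Theta^l(a)$ on $\Linfd$ with $\|\Theta^l(a)\|_{cb}=\|a\|_{cb}$; finite rank is preserved because $\vp_\lambda$ finite rank forces $a_\lambda\in\mrm{c}_{00}(\bbGamma)$ (a multiplier with finite-rank associated map is supported on finitely many $p_\alpha$), and then $\Theta^l(a_\lambda)$ has range inside $\Pol_F(\wh\bbGamma)$ for the corresponding finite $F$, hence is finite rank on $\Linfd$ as well. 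Weak$^*$ pointwise convergence $\Theta^l(a_\lambda)(x)\to x$ then follows from norm convergence on the weak$^*$-dense subspace $\Pol(\wh\bbGamma)$ together with the uniform CB bound. For the reverse inequality ${}_{\Ljd}\Lambda_{cb}(\CGd)\le{}_{\Ljd}\Lambda_{cb}(\Linfd)$, start from a normal finite-rank left-module CB net $(\psi_\lambda)$ on $\Linfd$; again $\psi_\lambda=\Theta^l(a_\lambda)$ with $a_\lambda\in\mrm{c}_{00}(\bbGamma)$ by the finite-rank/normality constraint, so $\psi_\lambda$ restricts to a finite-rank left-module CB map on $\CGd$ with the same CB norm, and one must upgrade weak$^*$ pointwise convergence on $\Linfd$ to norm pointwise convergence on $\CGd$. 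This last upgrade is the standard maneuver: it suffices to check norm convergence on the dense subalgebra $\Pol(\wh\bbGamma)\subseteq\CGd$, where each $\psi_\lambda$ already acts, and on the finite-dimensional blocks $\Pol_F(\wh\bbGamma)$ weak$^*$ convergence and norm convergence coincide.

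I expect the main obstacle to be the bookkeeping around \emph{normality and finite rank}: one has to argue cleanly that a finite-rank normal $\Ljd$-module map on $\Linfd$, or a finite-rank $\Ljd$-module map on $\CGd$, necessarily comes from an element of $\mrm{c}_{00}(\bbGamma)$ and is thus ``block-diagonal'' with respect to the $p_\alpha$'s --- this is what makes the restriction/extension between the two levels behave well and preserve all the relevant data. Once that structural fact is pinned down (using the classification of module maps cited in Section~\ref{sec:preliminaries} together with $\Theta^l(p_F)$ being the projection onto $\Pol_F(\wh\bbGamma)$), the four inequalities assemble into the displayed equalities. For the bimodule equality ${}_{\Ljd}\Lambda_{cb,\Ljd}(\CGd)={}_{\Ljd}\Lambda_{cb,\Ljd}(\Linfd)$ one repeats the $\mrm{C}$-versus-von-Neumann argument verbatim, now using that central multipliers $a\in\mc{Z}\M^l_{cb}(\A(\bbGamma))$ correspond exactly to CB $\Ljd$-bimodule maps on $\CGd$ and to normal CB $\Ljd$-bimodule maps on $\Linfd$ (as recalled at the end of the preliminaries), and observing that $R$ again intertwines the two sides if one also wants a left/right-bimodule symmetry, though for the stated bimodule identity only the $\mrm{C}$/von Neumann comparison is needed.
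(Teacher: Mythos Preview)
Your overall strategy matches the paper's: use the unitary antipode for left $=$ right, and pass through the multiplier description $\vp=\Theta^l(a)$ with $a\in\mrm{c}_{00}(\bbGamma)$ for the \cst/von Neumann comparison. The direction ${}_{\Ljd}\Lambda_{cb}(\CGd)\le{}_{\Ljd}\Lambda_{cb}(\Linfd)$ is argued exactly as in the paper (weak$^*$ and norm topologies agree on the finite-dimensional blocks $\Pol_F(\wh\bbGamma)$, then extend by density and the uniform bound).

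There is, however, a genuine gap in what you call ``the easy direction'' ${}_{\Ljd}\Lambda_{cb}(\Linfd)\le{}_{\Ljd}\Lambda_{cb}(\CGd)$. You write that weak$^*$ pointwise convergence $\Theta^l(a_\lambda)(x)\to x$ on $\Linfd$ ``follows from norm convergence on the weak$^*$-dense subspace $\Pol(\wh\bbGamma)$ together with the uniform CB bound''. This inference is \emph{false} as a general principle: on $M=\ell^\infty(\NN)$ with $D=\mrm{c}_{00}$, the normal maps $\psi_n(x)=x+x_n e_0$ are uniformly bounded and satisfy $\psi_n(y)=y$ eventually for each $y\in\mrm{c}_{00}$, yet $\psi_n(\mathbf{1})=\mathbf{1}+e_0$ does not converge weak$^*$ to $\mathbf{1}$. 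So uniform boundedness plus convergence on a weak$^*$-dense set does not force point-weak$^*$ convergence; some use of the module/multiplier structure is required.

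The paper supplies exactly this missing ingredient: it uses that products $\omega_k\star\omega'_k$ are linearly dense in $\Ljd$ (\cite[Section 3]{Crann}) and that $\omega'\star x\in\CGd$ for any $x\in\Linfd$ (\cite[Lemma 4.6]{DKV_ApproxLCQG}). Then for $\omega\approx\sum_k\omega_k\star\omega'_k$ one rewrites
\[
\la \Theta^l(a_\lambda)(x)-x,\omega_k\star\omega'_k\ra
=\la \Theta^l(a_\lambda)(\omega'_k\star x)-\omega'_k\star x,\omega_k\ra,
\]
using the left $\Ljd$-module property of $\Theta^l(a_\lambda)$, and now the argument of $\Theta^l(a_\lambda)$ lies in $\CGd$ where you already have norm convergence. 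An alternative fix, closer in spirit to your sketch, is to pass to the predual: $(\Theta^l(a_\lambda))_*$ corresponds to multiplication by $a_\lambda$ on $\A(\bbGamma)\simeq\Ljd$, and $a_\lambda b\to b$ in $\A(\bbGamma)$ for all $b$ follows from $a_\lambda\to\I$ pointwise on the norm-dense subspace $\mrm{c}_{00}(\bbGamma)\subseteq\A(\bbGamma)$ together with $\|a_\lambda\|_{cb}\le C$ --- but note that here the density is in \emph{norm}, not merely weak$^*$, which is why the argument goes through. Either way, the module/multiplier structure is doing real work that your stated justification omits.
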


\begin{proof}
If $\psi\in \CB^\sigma(\Linfd)$ is a normal, left $\Ljd$-module map, then $\wh{R}\circ\psi\circ \wh{R}$ is a normal right $\Ljd$-module map with $\|\wh{R}\circ\psi\circ\wh{R}\|_{cb}=\|\psi\|_{cb}$ (\cite[Lemma 4.8]{DKV_ApproxLCQG}), where $\wh{R}$ is the unitary antipode on $\LL^{\infty}(\wh\bbGamma)$. We can similarly turn right $\Ljd$-module maps into left one, the property of being finite rank is preserved. Finally, net $(\psi_{\lambda})_{\lambda\in \Lambda}$ converges to $\id$ in the point-weak$^*$ topology if and only if $(\wh{R}\circ \psi_{\lambda} \circ \wh{R})_{\lambda\in \Lambda}$ converges to $\id$. This shows ${}_{\LL^1(\wh\bbGamma)}\Lambda_{cb}(\Linfd)=\Lambda_{cb,\LL^1(\wh\bbGamma)}(\Linfd)$.

The above quoted part of \cite[Lemma 4.8]{DKV_ApproxLCQG} has a \cst-algebraic variant (with virtually the same proof, using usual Wittstock's theorem \cite[Theorem B7]{BrownOzawa}): if $\vp\in \CB(\CGd)$ is a finite rank, left $\Ljd$-module map, then $\wh{R}\circ \vp \circ \wh{R}$ is a finite rank, right $\Ljd$-module map with the same CB norm, and vice versa. Similarly, $(\vp_\lambda)_{\lambda\in \Lambda}$ converges in point-norm topology to $\id$ if and only if $(\wh{R}\circ\vp_\lambda \circ \wh{R})_{\lambda\in \Lambda}$ does, hence ${}_{\LL^1(\wh\bbGamma)}\Lambda_{cb}(\CGd)=\Lambda_{cb,\LL^1(\wh\bbGamma)}(\CGd)$.

Assume that $(\psi_\lambda)_{\lambda\in \Lambda}$ is a net of finite rank maps in ${}_{\Ljd}\CB^\sigma(\Linfd)$ which converges to $\id$ in the point-weak$^*$ topology and $\|\psi_\lambda\|_{cb}\le {}_{\Ljd}\Lambda_{cb}(\Linfd)<+\infty$. As discussed in Section \ref{sec:preliminaries}, for each $\lambda\in \Lambda$ there is $a_\lambda\in \mrm{c}_{00}(\bbGamma)$ such that $\psi_\lambda=\Theta^l(a_\lambda)$. Then $\Theta^l(a_\lambda)$ restricts to a map $\vp_\lambda$ in ${}_{\Ljd}\CB(\CGd)$ with $\|\vp_\lambda\|_{cb}=\|\psi_\lambda\|_{cb}$ (by weak$^*$ density of $\CGd\subseteq\Linfd$) such that $\vp_\lambda(x)\xrightarrow[\lambda\in\Lambda]{} x$ in norm for every $x\in \Pol(\wh\bbGamma)$. Indeed, observe that $\{\vp_\lambda(x),x\,|\,\lambda\in \Lambda\}$ live in a finite dimensional subspace of $\Pol(\wh\bbGamma)$, and in finite dimensional spaces there is a unique Hausdorff vector space topology. Since $\Pol(\wh\bbGamma)$ is norm dense in $\CGd$ and the net $(\vp_\lambda)_{\lambda\in\Lambda}$ is bounded, a standard approximation argument allows us to conclude that $\vp_\lambda(x)\xrightarrow[\lambda\in\Lambda]{}x$ for all $x\in \CGd$ and consequently ${}_{\Ljd}\Lambda_{cb}(\CGd)\le {}_{\Ljd}\Lambda_{cb}(\Linfd)$.

Similar reasoning gives ${}_{\Ljd}\Lambda_{cb,\Ljd}(\CGd)\le {}_{\Ljd}\Lambda_{cb,\Ljd}(\Linfd)$. The only difference is that if $\psi_\lambda$ is known to be a $\Ljd$-bimodule map, then so will be $\vp_\lambda$.

Assume that $(\vp_\lambda)_{\lambda\in\Lambda}$ is a net of finite rank maps in ${}_{\Ljd}\CB(\CGd)$ with CB norm bounded by $\|\vp_\lambda\|_{cb}\le {}_{\Ljd} \Lambda_{cb}(\CGd)$, assumed to be finite. As in the previous paragraph, there are $a_\lambda\in \mrm{c}_{00}(\bbGamma)$ such that $\vp_\lambda=\Theta^l(a_\lambda)\rest_{\CGd}$. Then $\Theta^l(a_\lambda)$ are normal, finite rank, CB maps in ${}_{\Ljd}\CB^\sigma(\Linfd)$ with $\|\Theta^l(a_\lambda)\|_{cb}=\|\vp_\lambda\|_{cb}$. Take $x\in \Linfd\setminus\{0\}, \omega\in \Ljd\setminus\{0\}$ and $\eps>0$.  Since products are linearly dense in $\Ljd$ (\cite[Section 3]{Crann}), we can find $\omega_k, \omega'_k\in \Ljd\,(1\le k \le K)$ such that $\|\omega-\sum_{k=1}^{K}\omega_k\star \omega'_k\|\le \tfrac{\eps}{2 \|x\| (1+ {}_{\Ljd}\Lambda_{cb}(\CGd)) }$. Furthermore, for any $k$ we have $\omega'_k\star x\in \CGd$ (\cite[Lemma 4.6]{DKV_ApproxLCQG}), hence there is $\lambda_0\in \Lambda$ such that 
\[
\|\Theta^l(a_\lambda)(\omega'_k\star x)-\omega'_k\star x\|\le
\tfrac{\eps}{2 K(1+\|\omega_k\|) } \quad(1\le k \le K, \lambda\ge \lambda_0).
\]
For $\lambda\ge \lambda_0$ we have
\[\begin{split}
&\quad\;
|\la \Theta^l(a_\lambda)(x)-x ,\omega \ra |\\
&\le 
\tfrac{\eps}{2} + \sum_{k=1}^{K} |\la  \Theta^l(a_\lambda)(x)-x,\omega_k\star\omega'_k \ra |=
\tfrac{\eps}{2} + \sum_{k=1}^{K} |\la \omega'_k\star  \Theta^l(a_\lambda)(x)-\omega'_k\star x,\omega_k \ra |\\
&=
\tfrac{\eps}{2} + \sum_{k=1}^{K} |\la  \Theta^l(a_\lambda)(\omega'_k\star x)-\omega'_k\star x,\omega_k \ra |\le 
\tfrac{\eps}{2} + \sum_{k=1}^{K}  
\|\Theta^l(a_\lambda)(\omega'_k\star x)-\omega'_k\star x\|\|\omega_k\|\le \eps.
\end{split}\]
This proves $\Theta^l(a_\lambda)(x)\xrightarrow[\lambda\in\Lambda]{} x$ weak$^*$ and consequently ${}_{\Ljd}\Lambda_{cb}(\Linfd)\le {}_{\Ljd} \Lambda_{cb}(\CGd)$. As above, a minor modification gives $\Ljd$-bimodule version.
\end{proof}

Because of Proposition \ref{prop1}, we will focus on ${}_{\Ljd} \Lambda_{cb}(\CGd)$ and ${}_{\Ljd}\Lambda_{cb,\Ljd}(\CGd)$. Let us now recall Cowling-Haagerup constant and its central variant for discrete quantum groups (\cite{Brannan, Averaging, Freslon}).

\begin{definition}\label{def1}
Let $\bbGamma$ be a discrete quantum group.
\begin{itemize}
\item The Cowling-Haagerup constant of $\bbGamma$, $\Lambda_{cb}(\bbGamma)\in\left[1,+\infty\right]$, is the infimum of all numbers $C\ge 1$ such that there is a net $(a_\lambda)_{\lambda\in \Lambda}$ in $\mrm{c}_{00}(\bbGamma)$  with $\|a_\lambda\|_{cb}\le C$ and $a_{\lambda}\xrightarrow[\lambda\in \Lambda]{} \I$ pointwise. If no such number exists, set $\Lambda_{cb}(\bbGamma)=+\infty$. If $\Lambda_{cb}(\bbGamma)<+\infty$, one says that $\bbGamma$ is weakly amenable.
\item The central Cowling-Haagerup constant $\mc{Z}\Lambda_{cb}(\bbGamma)\in\left[1,+\infty\right]$ is the infimum of all numbers $C\ge 1$ such that there is a net $(a_\lambda)_{\lambda\in \Lambda}$ in $\mc{Z} \mrm{c}_{00}(\bbGamma)$  with $\|a_\lambda\|_{cb}\le C$ and $a_{\lambda}\xrightarrow[\lambda\in \Lambda]{} \I$ pointwise. If no such number exists, set $\mc{Z} \Lambda_{cb}(\bbGamma)=+\infty$. If $\mc{Z}\Lambda_{cb}(\bbGamma)<+\infty$, one says that $\bbGamma$ is centrally weakly amenable.
\end{itemize}
\end{definition}

Similarly to Definition \ref{def2}, the infimum is actually attainable. Let us note that in the definition of $\Lambda_{cb}(\bbGamma)$ we could also consider more general nets $(a_\lambda)_{\lambda\in \Lambda}$ assumed only to be in the Fourier algebra $\A(\bbGamma)$. A standard approximation argument shows that both definitions are equivalent (see e.g.~\cite[Section 3.2]{Brannan}). Instead of speaking about pointwise convergence, one can require that $(a_\lambda)_{\lambda\in \Lambda}$ forms an approximate identity in the Fourier algebra $\A(\bbGamma)$. In this context, both conditions are equivalent. The following result provides a link between Cowling-Haagerup constant of a discrete quantum group $\bbGamma$ and the associated module $\mrm{C}(\wh\bbGamma)$. Its proof is quite standard, compare e.g.~\cite[Theorem 6.6]{Brannan}.

\begin{theorem}\label{thm3}
For any discrete quantum group $\bbGamma$
\[
\Lambda_{cb}(\bbGamma)={}_{\Ljd}\Lambda_{cb}(\mrm{C}(\wh\bbGamma))
\quad\textnormal{ and }\quad
\mc{Z}\Lambda_{cb}(\bbGamma)={}_{\Ljd}\Lambda_{cb,\LL^1(\wh\bbGamma)}(\mrm{C}(\wh\bbGamma)).
\]
\end{theorem}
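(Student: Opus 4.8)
The plan is to prove both equalities by exhibiting, in each direction, an explicit dictionary between approximating nets of the two kinds, using the identification of normal CB left $\Ljd$-module maps on $\Linfd$ with left completely bounded multipliers $\Theta^l(a)$ recalled in Section~\ref{sec:preliminaries}, together with Proposition~\ref{prop1} to pass freely between the \cst{} and von Neumann levels. I will treat the first equality in detail and indicate the modifications needed for the central version at the end.

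For the inequality $\Lambda_{cb}(\bbGamma)\ge {}_{\Ljd}\Lambda_{cb}(\CGd)$, I would start with a net $(a_\lambda)_{\lambda\in\Lambda}$ in $\mrm{c}_{00}(\bbGamma)$ with $\|a_\lambda\|_{cb}\le \Lambda_{cb}(\bbGamma)+\eps$ and $a_\lambda\to\I$ pointwise, and consider the maps $\vp_\lambda=\Theta^l(a_\lambda)\rest_{\CGd}\in {}_{\Ljd}\CB(\CGd)$, which satisfy $\|\vp_\lambda\|_{cb}=\|a_\lambda\|_{cb}$. Since $a_\lambda\in\mrm{c}_{00}(\bbGamma)$, the map $\Theta^l(a_\lambda)$ kills all but finitely many spectral blocks, so $\vp_\lambda$ is finite rank (it maps into $\Pol_{F_\lambda}(\wh\bbGamma)$ for the finite set $F_\lambda=\supp(a_\lambda)$). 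It remains to check $\vp_\lambda(x)\to x$ in norm for all $x\in\CGd$; by boundedness of the net it suffices to check this on $\Pol(\wh\bbGamma)$, and there one uses that $\Theta^l(a_\lambda)$ acts on the coefficient $U^\alpha_{i,j}$ by the entries of $a_{\lambda,\alpha}\in\B(\msf{H}_\alpha)$, so pointwise convergence $a_{\lambda,\alpha}\to\I_{\msf{H}_\alpha}$ gives convergence on the finite-dimensional space spanned by the relevant coefficients, where all Hausdorff vector space topologies agree. Hence ${}_{\Ljd}\Lambda_{cb}(\CGd)\le \Lambda_{cb}(\bbGamma)$.

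For the reverse inequality $\Lambda_{cb}(\bbGamma)\le {}_{\Ljd}\Lambda_{cb}(\CGd)$, take a net $(\vp_\lambda)_{\lambda\in\Lambda}$ of finite rank maps in ${}_{\Ljd}\CB(\CGd)$ with $\|\vp_\lambda\|_{cb}\le {}_{\Ljd}\Lambda_{cb}(\CGd)+\eps$ and $\vp_\lambda(x)\to x$ in norm. By the cited consequence of \cite[Proposition 3.5]{DKV_ApproxLCQG}, each $\vp_\lambda=\Theta^l(b_\lambda)\rest_{\CGd}$ for some $b_\lambda\in\M^l_{cb}(\A(\bbGamma))$ with $\|b_\lambda\|_{cb}=\|\vp_\lambda\|_{cb}$. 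The point-norm convergence $\Theta^l(b_\lambda)(U^\alpha_{i,j})\to U^\alpha_{i,j}$ translates, via the coefficient action, into pointwise convergence $b_{\lambda,\alpha}\to\I_{\msf{H}_\alpha}$ for every $\alpha$, i.e. $b_\lambda\to\I$ pointwise in $\ell^\infty(\bbGamma)$. This already shows weak amenability with the bound $\|b_\lambda\|_{cb}\le {}_{\Ljd}\Lambda_{cb}(\CGd)+\eps$ if we allow nets in $\A(\bbGamma)$ rather than $\mrm{c}_{00}(\bbGamma)$ — but we must remember that $b_\lambda$ need not be finitely supported. The fix is the standard cut-off argument recalled just after Definition~\ref{def1}: replace $b_\lambda$ by $p_{F}\,b_\lambda$ for suitable finite $F$; since $p_F=\Theta^l$-images behave well (indeed $\Theta^l(p_F)$ is the projection onto $\Pol_F(\wh\bbGamma)$, and $\|p_F b_\lambda\|_{cb}\le \|b_\lambda\|_{cb}$ because multiplying by a central projection corresponds to compressing by the CB-contractive projection $\Theta^l(p_F)$), one extracts from $(p_F b_\lambda)$ a net in $\mrm{c}_{00}(\bbGamma)$ still converging pointwise to $\I$ with the same CB bound. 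This gives $\Lambda_{cb}(\bbGamma)\le {}_{\Ljd}\Lambda_{cb}(\CGd)$, and combined with the first part, equality.

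For the central statement $\mc{Z}\Lambda_{cb}(\bbGamma)={}_{\Ljd}\Lambda_{cb,\Ljd}(\CGd)$ I would run exactly the same two arguments, using that by the last sentence of Section~\ref{sec:preliminaries} central multipliers $a\in\mc{Z}\M^l_{cb}(\A(\bbGamma))$ correspond precisely to CB $\Ljd$-bimodule maps on $\CGd$, and observing that both the "$\mrm{c}_{00}\Rightarrow$ bimodule map" passage and the cut-off step preserve centrality: if $a_\lambda\in\mc{Z}\mrm{c}_{00}(\bbGamma)$ then $\Theta^l(a_\lambda)$ is a bimodule map, and conversely if $\vp_\lambda$ is a bimodule map then $b_\lambda$ is central, and $p_F b_\lambda$ is again central since $p_F$ is. The main obstacle — and the only genuinely non-formal point — is ensuring in the reverse direction that the cut-off net $(p_F b_\lambda)$ can be organized into a single net that both converges pointwise and keeps the CB norm under control while lying in $\mrm{c}_{00}(\bbGamma)$ (resp.\ $\mc{Z}\mrm{c}_{00}(\bbGamma)$); this is exactly the "net indexed over $\Lambda\times\NN$" device mentioned after Definition~\ref{def2}, and it is routine but must be spelled out.
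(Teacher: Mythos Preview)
Your overall strategy matches the paper's, and the direction ${}_{\Ljd}\Lambda_{cb}(\CGd)\le \Lambda_{cb}(\bbGamma)$ is correct. In the reverse direction, however, your cut-off step contains a genuine error: the claim that $\Theta^l(p_F)$ is CB-contractive (equivalently $\|p_F\|_{cb}\le 1$) is false. Already for $\bbGamma=\ZZ$, the map $\Theta^l(p_F)$ on $\mrm{C}(\TT)$ is the Fourier projection onto the trigonometric polynomials with frequencies in $F$, whose (completely bounded) norm is the corresponding Lebesgue constant and is unbounded in $|F|$. Hence $\|p_F b_\lambda\|_{cb}\le \|b_\lambda\|_{cb}$ does not follow. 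The ``standard approximation argument'' after Definition~\ref{def1} does not rescue this either: that argument approximates elements of $\A(\bbGamma)$ by elements of $\mrm{c}_{00}(\bbGamma)$ in the Fourier-algebra norm (which dominates $\|\cdot\|_{cb}$), whereas at this stage you only know $b_\lambda\in\M^l_{cb}(\A(\bbGamma))$, not $b_\lambda\in\A(\bbGamma)$.

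The fix --- which is exactly what the paper does --- is to note that no cut-off is needed: since $\vp_\lambda$ has finite rank, the multiplier $b_\lambda$ already lies in $\mrm{c}_{00}(\bbGamma)$. Indeed $\Theta^l(b_\lambda)$ preserves each block $\Pol_{\{\alpha\}}(\wh\bbGamma)$ and acts nontrivially there precisely when $b_{\lambda,\alpha}\neq 0$; as the spaces $\Pol_{\{\alpha\}}(\wh\bbGamma)$ are linearly independent, finite rank of $\vp_\lambda$ forces $b_{\lambda,\alpha}=0$ for all but finitely many $\alpha$. With this observation the reverse inequality concludes immediately, and the central version follows verbatim since a bimodule map $\vp_\lambda$ yields a central $b_\lambda\in\mc{Z}\mrm{c}_{00}(\bbGamma)$.
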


\begin{proof}
Assume $\Lambda_{cb}(\bbGamma)<+\infty$ and let $(a_\lambda )_{\lambda\in \Lambda}$ be a net in $\mrm{c}_{00}(\bbGamma)$ such that $\|a_{\lambda}\|_{cb}\le \Lambda_{cb}(\bbGamma)$ and $a_{\lambda}\xrightarrow[\lambda\in\Lambda]{}\I$ pointwise. Since $\mrm{c}_{00}(\bbGamma)\subseteq \A(\bbGamma)$, we can find normal functionals $\omega_\lambda\in\Ljd$ such that $a_\lambda=\wh\lambda(\omega_\lambda)$. Then $\Theta^l(a_\lambda)=(\omega_\lambda\otimes \id)\wh\Delta$, consider $\vp_\lambda=\Theta^l(a_\lambda)\rest_{\CGd}$. This map is of finite rank, belongs to ${}_{\Ljd}\CB(\CGd)$ and has CB norm equal to $\|a_\lambda\|_{cb}$. Furthermore, since $\sup_{\lambda\in\Lambda}\|a_\lambda\|_{cb}<\infty$, to see that $\vp_\lambda\xrightarrow[\lambda\in\Lambda]{}\id$ in the point-norm topology of $\CGd$, it is enough to look at the dense subspace $\Pol(\wh\bbGamma)$. Since $a_{\lambda}\xrightarrow[\lambda\in\Lambda]{}\I$ pointwise, for any $\alpha\in\Irr(\wh\bbGamma),1\le i,j\le \dim(\alpha)$ we have $\omega_\lambda(U^{\alpha}_{i,j})\xrightarrow[\lambda\in\Lambda]{}\delta_{i,j}$ and consequently $\vp_\lambda(U^{\alpha}_{i,j})=\sum_{k=1}^{\dim(\alpha)} \omega_{\lambda}(U^{\alpha}_{i,k})U^{\alpha}_{k,j}$ converges in norm to $U^{\alpha}_{i,j}$. We conclude that ${}_{\Ljd}\Lambda_{cb}(\CGd)\le \Lambda_{cb}(\bbGamma)$.

Assume now that ${}_{\Ljd}\Lambda_{cb}(\CGd)<+\infty$ with the corresponding net $(\vp_\lambda)_{\lambda\in\Lambda}$ in ${}_{\Ljd}\CB(\CGd)$. As discussed in Section \ref{sec:preliminaries}, there is a multiplier $a_\lambda\in \M^l_{cb}(\A(\bbGamma))$ such that $\vp_\lambda=\Theta^l(a_\lambda)\rest_{\CGd}$, in particular $\|a_{\lambda}\|_{cb}=\|\vp_\lambda\|_{cb}$. Since $\vp_\lambda$ is of finite rank, we have in fact $a_\lambda\in \mrm{c}_{00}(\bbGamma)$. As the net $(\vp_\lambda)_{\lambda\in \Lambda}$ converges to $\id$ in the point-norm topology, we have $a_{\lambda}\xrightarrow[\lambda\in\Lambda]{}\I$ pointwise. This shows $\Lambda_{cb}(\bbGamma)\le{}_{\Ljd}\Lambda_{cb}(\CGd)$.

The central and bimodule variant is proved in a similar way, with slight modification. In the first direction, we additionally have $a_\lambda\in \mc{Z} \mrm{c}_{00}(\bbGamma)$, then $\Theta^l(a_\lambda)\rest_{\CGd}$ is a $\Ljd$-bimodule map giving ${}_{\Ljd}\Lambda_{cb,\Ljd}(\CGd)\le \mc{Z} \Lambda_{cb}(\bbGamma)$. Conversely since $\vp_\lambda$ is a map of bimodules, $a_\lambda$ is central.
\end{proof}

\begin{remark}
It is an interesting question whether ${}_{\Ljd}\Lambda_{cb}(\CGd)={}_{\Ljd}\Lambda_{cb,\Ljd}(\CGd)$ always holds, equivalently (by Theorem \ref{thm3}) whether Cowling-Haagerup constant of $\bbGamma$ is equal to its central variant $\Lambda_{cb}(\bbGamma)=\mc{Z}\Lambda_{cb}(\bbGamma)$. To the best of our knowledge, no counterexample is known. An analogous result for strong amenability is false (see e.g.~\cite[Theorem 7.6]{Averaging}).
\end{remark}

\section{Cowling-Haagerup constant of the product}\label{sec:main}

In this section we prove our main result: (central) Cowling-Haagerup constant of discrete quantum groups is multiplicative (Theorem \ref{thm2}). We will do this by establishing first an analogous result for modules $\CGd$ (Proposition \ref{prop2}) and then using Theorem \ref{thm3}. As mentioned in the introduction, our proof of Proposition \ref{prop2} is a modification of the proof of \cite[Theorem 12.3.13]{BrownOzawa} (see also Remark \ref{rem2}).\\

It will be convenient to work in the more general language of completely contractive Banach algebras and operator modules, see Section \ref{sec:preliminaries}. The next lemma is a bimodule generalisation of \cite[Lemma 12.3.16]{BrownOzawa}. Recall that any $A$-$B$-bimodule, is also a right $A^{op}\wh\otimes B$-module (see Section \ref{sec:preliminaries}).

\begin{lemma}\label{lemma5}
Let $A,B$ be completely contractive Banach algebras, $X$ an $A$-$B$-bimodule and $F\subseteq E\subseteq X$ finite dimensional submodules. Take $C\ge 1$. The following statements are equivalent:
\begin{enumerate}
\item there is $\vp\in \CB_{A^{op}\wh\otimes B}(X,E)$ such that $\vp(x)=x\,(x\in F)$ and $\|\vp\|_{\CB(X,E)}\le C$,
\item $|\kappa (u)|\le C \|q(u)\|$ for $u\in F\odot E^*$, where $\kappa\colon E\odot E^*\rightarrow \CC$ is the pairing map and $q\colon X\wh\otimes E^*\rightarrow X\wh{\otimes}_{A^{op}\wh{\otimes} B} E^*$ is the canonical quotient map.
\end{enumerate}
\end{lemma}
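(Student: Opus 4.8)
The plan is to exploit the duality $\CB_{A^{op}\wh\otimes B}(X,E^{**})\simeq (X\wh\otimes_{A^{op}\wh\otimes B}E^*)^*$ (from \cite[Proposition 3.5.9]{BlecherMerdy}, as recalled in Section \ref{sec:preliminaries}), together with the fact that for the finite dimensional space $E$ the canonical embedding $E\to E^{**}$ is a complete isometry, so that $\CB_{A^{op}\wh\otimes B}(X,E)$ sits completely isometrically inside $(X\wh\otimes_{A^{op}\wh\otimes B}E^*)^*$. Write $Y=X\wh\otimes_{A^{op}\wh\otimes B}E^*$ and $q\colon X\wh\otimes E^*\to Y$ for the quotient map. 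Under this identification, a map $\vp\in \CB_{A^{op}\wh\otimes B}(X,E)$ corresponds to the functional $\Phi_\vp\in Y^*$ with $\Phi_\vp(q(x\otimes \omega))=\la\vp(x),\omega\ra$, and $\|\vp\|_{\CB(X,E)}=\|\Phi_\vp\|_{Y^*}$.

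First I would reformulate the interpolation condition ``$\vp(x)=x$ for $x\in F$''. Since $F\subseteq E$ and the pairing between $E$ and $E^*$ is nondegenerate, $\vp(x)=x$ for all $x\in F$ is equivalent to $\la\vp(x),\omega\ra=\la x,\omega\ra=\kappa(x\otimes\omega)$ for all $x\in F$, $\omega\in E^*$; i.e.\ $\Phi_\vp$ agrees with the functional $q(x\otimes\omega)\mapsto\kappa(x\otimes\omega)$ on the subspace $q(F\odot E^*)\subseteq Y$. So existence of a $\vp$ as in (1) is exactly the statement that the linear functional $q(u)\mapsto \kappa(u)$, a priori defined on the (finite dimensional, hence closed) subspace $q(F\odot E^*)$ of $Y$, is \emph{well defined} there and extends to a functional on all of $Y$ of norm $\le C$. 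I would check the well-definedness point carefully: $q$ restricted to $F\odot E^*$ may have a kernel, namely the image of the relations $xa\otimes\omega - x\otimes a\omega$ with $x\in F$, and one must note that $\kappa$ kills these relations precisely because $F$ and $E$ are \emph{sub}modules and $\kappa$ is a morphism of $A^{op}\wh\otimes B$-modules into $\CC$ (this is where the bimodule hypothesis on $F,E$ is used). Granting that, by Hahn--Banach the functional extends with norm $\le C$ iff it is bounded by $C\|\cdot\|_Y$ on $q(F\odot E^*)$, i.e.\ iff $|\kappa(u)|\le C\|q(u)\|$ for all $u\in F\odot E^*$ — which is (2).

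For the direction (2)$\Rightarrow$(1) this gives, via Hahn--Banach, a functional $\Phi\in Y^*$ of norm $\le C$ restricting to $\kappa$ on $q(F\odot E^*)$; the duality then produces $\vp\in \CB_{A^{op}\wh\otimes B}(X,E^{**})=\CB_{A^{op}\wh\otimes B}(X,E)$ with $\|\vp\|_{cb}\le C$ and $\vp(x)=x$ on $F$. For (1)$\Rightarrow$(2) one simply reads off the inequality from $\|\Phi_\vp\|_{Y^*}=\|\vp\|_{cb}\le C$ evaluated on elements $q(u)$, $u\in F\odot E^*$, using $\Phi_\vp(q(u))=\kappa(u)$.

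The step I expect to be the main (minor) obstacle is bookkeeping around the module tensor product: one must be sure that $q(F\odot E^*)$ inside $Y$ really is computed by quotienting $F\odot E^*$ only by the relations involving elements of $F$ (not all of $X$), and that $\kappa$ annihilates exactly those — i.e.\ the compatibility of the pairing map $\kappa\colon E\odot E^*\to\CC$ with the right $A^{op}\wh\otimes B$-action coming from the $A$-$B$-bimodule structure, and the fact that $E^*$ carries the dual right-module structure making $\kappa$ balanced. Once that is nailed down, everything else is a direct transcription of \cite[Lemma 12.3.16]{BrownOzawa} with $A^{op}\wh\otimes B$ in place of the trivial algebra, using the operator-space duality in place of the Banach-space one.
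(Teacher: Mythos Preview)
Your proposal is correct and follows essentially the same route as the paper's proof: both use the complete isometry $\CB_{A^{op}\wh\otimes B}(X,E)\simeq (X\wh\otimes_{A^{op}\wh\otimes B}E^*)^*$ to translate (1) into the existence of a norm-$\le C$ extension of the functional $q(u)\mapsto\kappa(u)$ from $q(F\odot E^*)$, and then invoke Hahn--Banach. One small simplification: your separate verification that $\kappa$ is balanced (hence the functional well defined on $q(F\odot E^*)$) is not needed in the direction $(2)\Rightarrow(1)$, since the inequality $|\kappa(u)|\le C\|q(u)\|$ already forces $\kappa(u)=0$ whenever $q(u)=0$.
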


\begin{proof}
Assume that we have $\vp\in \CB_{A^{op}\wh\otimes B}(X,E)$ as in $(1)$, take $u\in F\odot E^*$ and write $u=\sum_{k=1}^{n} x_{k}\otimes \omega_{k}$ for some $x_{k}\in F,\omega_{k}\in E^*$. Then using identifications $\CB(F,E)=\CB(F,E^{**})\simeq (F\wh\otimes E^*)^*$ and $ \CB_{A^{op}\wh\otimes B}(X,E)\simeq (X\wh\otimes_{A^{op}\wh\otimes B} E^*)^*$ we calculate
\[\begin{split}
&\quad\;
|\kappa(u)|=\bigl|\sum_{k=1}^{n} \la \omega_{k}, x_{k}\ra \bigr|=
\bigl|\sum_{k=1}^{n} \la \omega_{k}, \vp(x_{k})\ra \bigr|
=|\la \vp , u \ra_{\CB(X,E), X\wh\otimes E^*}|\\
&=
|\la
\vp,q(u)
\ra_{\CB_{A^{op}\wh{\otimes} B}(X,E), X\wh{\otimes}_{A^{op}\wh{\otimes} B}E^*}|
\le 
C\|q(u)\|,
\end{split}\]
i.e.~$(2)$ holds.

Conversely, assume that $|\kappa(u)|\le C \|q(u)\|$ for all $u\in F\odot E^*$. Then the functional
\begin{equation}\label{eq2}
X\wh\otimes_{A^{op}\wh\otimes B} E^*\supseteq 
q(F\odot E^*)\ni q(u) \mapsto \kappa(u)\in \CC
\end{equation}
is well defined and has norm bounded by $C$. By Hahn-Banach theorem, we can find $\vp\in (X\wh\otimes_{A^{op}\wh\otimes B} E^*)^*\simeq \CB_{A^{op}\wh\otimes B}(X,E)$ which extends \eqref{eq2} and has norm $\le C$. For $x\in F, \omega\in E^*$ we have
\[
\la \omega, \vp(x)\ra = \la \vp, q(x\otimes \omega)\ra_{\CB_{A^{op}\wh\otimes B}(X,E),  X\wh\otimes_{A^{op}\wh\otimes B} E^*}=
\kappa(x\otimes \omega)=\la \omega,x\ra
\]
hence $\vp(x)=x$. 
\end{proof}

\begin{remark}\label{rem3}
For general completely contractive Banach algebras $A,B$ and bimodules $X,Y$ it can happen that ${}_A\CB_B(X,Y)\subsetneq\CB_{A^{op}\wh\otimes B}(X,Y)$ (consider the trivial module structure $ax=0,ay=0$ for $a\in A,x\in X,y\in Y$). On the other hand, in what follows we will be interested in the $\LL^1(\wh\bbGamma)$-bimodule structure on $\mrm{C}(\wh\bbGamma)$ for a discrete quantum group $\bbGamma$, or its sub-bimodules. In this situation we have equality ${}_{\LL^1(\wh\bbGamma)}\CB_{\LL^1(\wh\bbGamma)}(\mrm{C}(\wh\bbGamma))=\CB_{\LL^1(\wh\bbGamma)^{op}\wh\otimes \LL^1(\wh\bbGamma)}(\mrm{C}(\wh\bbGamma))$. Indeed, this easily follows from considering the dense subspace $\Pol(\wh\bbGamma)\subseteq \mrm{C}(\wh\bbGamma)$ and the following observation: for $x\in \Pol(\wh\bbGamma)$ there exists $\omega\in \LL^1(\wh\bbGamma)$ such that $\omega\star x=x\star \omega=x$.
\end{remark}

Next we establish several useful properties of the left $\Ljd$-module $\CGd$.

\begin{lemma}\label{lemma3}
Let $\bbGamma$ be a discrete quantum group and $C\ge 1$. The following conditions are equivalent:
\begin{enumerate}
\item ${}_{\Ljd}\Lambda_{cb}(\CGd)\le C$,
\item for every $\eps>0$ and finite $\emptyset\neq F\subseteq \Irr(\wh\bbGamma)$ there is a finite rank $\vp\in {}_{\Ljd}\CB(\CGd)$ such that $\|\vp(x) - x\|\le \eps \|x\| \,(x\in \Pol_F(\wh\bbGamma))$ and $\|\vp\|_{cb}\le  C$.
\end{enumerate}
\end{lemma}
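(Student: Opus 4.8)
The plan is to establish the two implications separately; both are routine ``local-to-global'' arguments whose only real inputs are the uniform bound on the approximating maps and the finite-dimensionality of $\Pol_F(\wh\bbGamma)$.

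For $(1)\Rightarrow(2)$ I would first fix a net $(\vp_\mu)_{\mu}$ of finite rank maps in ${}_{\Ljd}\CB(\CGd)$ with $\|\vp_\mu\|_{cb}\le C$ and $\vp_\mu(x)\to x$ in norm for all $x\in\CGd$; such a net exists because the infimum in Definition \ref{def2} is attained (and if $C$ strictly exceeds that infimum one may simply take a witnessing net for a smaller bound). Then, fixing $\eps>0$ and a finite $\emptyset\neq F\subseteq\Irr(\wh\bbGamma)$, I would use that $\Pol_F(\wh\bbGamma)$ is finite-dimensional, so the pointwise convergence $\vp_\mu\rest_{\Pol_F(\wh\bbGamma)}\to\id$ is automatically uniform on the unit ball of $\Pol_F(\wh\bbGamma)$ (expand in a fixed basis and use equivalence of norms in finite dimension); hence some $\vp_\mu$ satisfies $\|\vp_\mu(x)-x\|\le\eps\|x\|$ for every $x\in\Pol_F(\wh\bbGamma)$, and it has all the other required properties by construction, so one takes $\vp=\vp_\mu$.

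For $(2)\Rightarrow(1)$ I would build a net indexed by the directed set $\Lambda$ of pairs $(F,\eps)$ with $\emptyset\neq F\subseteq\Irr(\wh\bbGamma)$ finite and $\eps>0$, ordered by $(F,\eps)\preccurlyeq(F',\eps')$ iff $F\subseteq F'$ and $\eps\ge\eps'$; directedness is clear (take unions of finite sets and minima of positive reals). For each $(F,\eps)\in\Lambda$ one chooses $\vp_{F,\eps}$ as provided by $(2)$. Each $\vp_{F,\eps}$ is a finite rank map in ${}_{\Ljd}\CB(\CGd)$ with $\|\vp_{F,\eps}\|_{cb}\le C$, so it only remains to verify $\vp_{F,\eps}(x)\to x$ in norm for every $x\in\CGd$. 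On the dense subspace $\Pol(\wh\bbGamma)=\bigcup_F\Pol_F(\wh\bbGamma)$ this is immediate: if $x\in\Pol_{F_0}(\wh\bbGamma)$ and $(F,\eps)\succcurlyeq(F_0,\delta)$, then $x\in\Pol_F(\wh\bbGamma)$ and $\|\vp_{F,\eps}(x)-x\|\le\eps\|x\|\le\delta\|x\|$. The passage to arbitrary $x\in\CGd$ is the standard three-$\eps$ estimate using the uniform bound $\|\vp_{F,\eps}\|\le\|\vp_{F,\eps}\|_{cb}\le C$ together with the norm-density of $\Pol(\wh\bbGamma)$ in $\CGd$. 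The resulting net witnesses the defining condition of ${}_{\Ljd}\Lambda_{cb}(\CGd)$ with constant $C$, whence ${}_{\Ljd}\Lambda_{cb}(\CGd)\le C$.

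I expect no genuine obstacle here --- this is essentially a bookkeeping lemma. The only points deserving care are keeping the conclusion sharp (``$\le C$'' rather than ``$\le C+\eps$''), which is why one invokes attainability of the infimum, and making sure the structural properties (finite rank, left $\Ljd$-module, $\|\cdot\|_{cb}\le C$) persist throughout, which they do since the maps are only selected from a net or re-indexed, never modified.
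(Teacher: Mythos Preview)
Your proposal is correct and follows essentially the same approach as the paper: both directions rely on the finite-dimensionality of $\Pol_F(\wh\bbGamma)$ (to upgrade pointwise to uniform convergence on its unit ball) and on the density of $\Pol(\wh\bbGamma)$ together with the uniform CB bound (for the converse). The paper's write-up makes the basis estimate in $(1)\Rightarrow(2)$ explicit via a constant $D$ with $\sum|x^\alpha_{i,j}|\le D\|x\|$, but this is exactly your ``equivalence of norms in finite dimension'' remark.
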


\begin{proof}
$(1)\Rightarrow(2)$: take $\eps>0$ and finite $\emptyset\neq F\subseteq \Irr(\wh\bbGamma)$. Since $\Pol_F(\wh\bbGamma)$ is a finite dimensional normed space with basis $\{U^{\alpha}_{i,j}\,|\,\alpha\in F,\,1\le i,j\le \dim(\alpha)\}$ we can find $D>0$ so that
\[
\sum_{\alpha\in F}\sum_{i,j=1}^{\dim(\alpha)} |x^\alpha_{i,j}|\le D \|\sum_{\alpha\in F}\sum_{i,j=1}^{\dim(\alpha)} x^\alpha_{i,j}U^{\alpha}_{i,j}\|
\]
for all $\sum_{\alpha\in F}\sum_{i,j=1}^{\dim(\alpha)} x^\alpha_{i,j}U^{\alpha}_{i,j}\in \Pol_F(\wh\bbGamma)$. By $(1)$, there is $\vp\in {}_{\Ljd}\CB(\CGd)$ such that $\|\vp\|_{cb}\le C$ and
\[
\|\vp(U^{\alpha}_{i,j})-U^{\alpha}_{i,j}\|\le \tfrac{\eps}{D}\quad(\alpha\in F,1\le i,j\le \dim(\alpha)).
\]
Then for any $x=\sum_{\alpha\in F}\sum_{i,j=1}^{\dim(\alpha)} x^\alpha_{i,j}U^{\alpha}_{i,j}\in \Pol_F(\wh\bbGamma)$ we have
\[
\|\vp(x)-x\|\le \sum_{\alpha\in F} \sum_{i,j=1}^{\dim(\alpha)}|x^{\alpha}_{i,j}| \|\vp(U^{\alpha}_{i,j})-U^{\alpha}_{i,j}\|\le 
\sum_{\alpha\in F} \sum_{i,j=1}^{\dim(\alpha)}|x^{\alpha}_{i,j}| \tfrac{\eps}{D}\le \eps\|x\|.
\]
$(2)\Rightarrow (1)$: for $\eps>0$ and finite $\emptyset\neq F\subseteq \Irr(\wh\bbGamma)$, let $\vp_{\eps,F}\in {}_{\Ljd}\CB(\CGd)$ be the map from $(2)$. As $\|\vp_{\eps,F}\|_{cb}\le C$ for all $\eps,F$ and $\Pol(\wh\bbGamma)$ is norm dense in $\CGd$, it easily follows that net $(\vp_{\eps,F})_{(\eps,F)}$ indexed over $\eps\in \left]0,1\right[$ and finite $\emptyset\neq F\subseteq \Irr(\wh\bbGamma)$ gives ${}_{\Ljd}\Lambda_{cb}(\CGd)\le C$.
\end{proof}

There is also a natural analog of Lemma \ref{lemma3} for $\Ljd$-bimodule $\CGd$. Recall that for finite $\emptyset\neq F\subseteq \Irr(\wh\bbGamma)$, $p_F\in \mrm{c}_{00}(\bbGamma)\subseteq \A(\bbGamma)$ is the central projection $p_F=\sum_{\alpha\in F}p_\alpha$.

\begin{lemma}\label{lemma1}
Let $\bbGamma$ be a discrete quantum group and $\eps>0$. For a finite set $\emptyset\neq F\subseteq \Irr(\wh\bbGamma)$,
\[
1\le \|p_F\|_{\A(\bbGamma)}\le\sqrt{\sum_{\alpha\in F}\dim_q(\alpha)^2 }.
\]
\end{lemma}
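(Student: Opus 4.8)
The plan is to move to the Fourier-algebra side. By \eqref{eq10} we have $p_F=\wh\lambda(\omega_F)$, and since $\A(\bbGamma)\ni\wh\lambda(\omega)\mapsto\omega\in\LL^1(\wh\bbGamma)$ is an isometric isomorphism, the claim is equivalent to $1\le\|\omega_F\|_{\LL^1(\wh\bbGamma)}\le\sqrt{\sum_{\alpha\in F}\dim_q(\alpha)^2}$. The lower bound needs nothing special: $p_F$ is a non-zero idempotent of the Banach algebra $\A(\bbGamma)$ (non-zero because $F\neq\emptyset$), so $\|p_F\|_{\A(\bbGamma)}=\|p_F\,p_F\|_{\A(\bbGamma)}\le\|p_F\|_{\A(\bbGamma)}^2$, which forces $\|p_F\|_{\A(\bbGamma)}\ge 1$ (alternatively one invokes the contractive inclusion $\A(\bbGamma)\hookrightarrow\mrm{c}_0(\bbGamma)$ together with $\|p_F\|_{\mrm{c}_0(\bbGamma)}=1$).

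For the upper bound I would realise $\omega_F$ as a vector functional in the GNS representation of the Haar state $h$ of $\wh\bbGamma$. Put $b_F=\sum_{\alpha\in F}\sum_{i=1}^{\dim(\alpha)}\dim_q(\alpha)\,\uprho_{\alpha,i}\,U^{\alpha*}_{i,i}\in\Pol(\wh\bbGamma)$, so that $\omega_F=h(b_F\,\cdot\,)$ directly from \eqref{eq10}. Letting $\Omega\in\LL^2(\wh\bbGamma)$ be the GNS cyclic vector and $\Lambda_h$ the GNS map, we have $h(b_F x)=\langle\Omega, b_F x\Omega\rangle=\langle\Lambda_h(b_F^*),x\Omega\rangle$ for $x\in\LL^\infty(\wh\bbGamma)$, i.e.\ $\omega_F$ is the vector functional with vectors $\Lambda_h(b_F^*)$ and $\Omega$. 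Since $\|\Omega\|^2=h(1)=1$, Cauchy--Schwarz gives
\[
\|\omega_F\|_{\LL^1(\wh\bbGamma)}\le\|\Lambda_h(b_F^*)\|=\sqrt{h(b_F b_F^*)}.
\]

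It then remains to evaluate $h(b_F b_F^*)$, and here the Woronowicz orthogonality relations do the work: $h\big(U^{\alpha*}_{i,i}U^{\beta}_{j,j}\big)$ vanishes unless $\alpha=\beta$ and $i=j$, in which case it equals $\uprho_{\alpha,i}^{-1}/\dim_q(\alpha)$, so all cross terms in the expansion of $b_F b_F^*$ disappear and one is left with
\[
h(b_F b_F^*)=\sum_{\alpha\in F}\sum_{i=1}^{\dim(\alpha)}\dim_q(\alpha)^2\,\uprho_{\alpha,i}^{2}\,\frac{\uprho_{\alpha,i}^{-1}}{\dim_q(\alpha)}=\sum_{\alpha\in F}\dim_q(\alpha)\sum_{i=1}^{\dim(\alpha)}\uprho_{\alpha,i}=\sum_{\alpha\in F}\dim_q(\alpha)^2,
\]
using $\Tr(\uprho_\alpha)=\dim_q(\alpha)$ at the last step; combining with the previous display yields $\|p_F\|_{\A(\bbGamma)}=\|\omega_F\|_{\LL^1(\wh\bbGamma)}\le\sqrt{\sum_{\alpha\in F}\dim_q(\alpha)^2}$. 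There is no genuine obstacle here: the single real idea is that the $\LL^1$-norm of a functional of the form $h(b\,\cdot\,)$ is controlled by $\sqrt{h(bb^*)}$, and the only thing demanding care is bookkeeping — tracking the normalisation of $\uprho_\alpha$ and using the correct one of the two mirror forms of the orthogonality relations.
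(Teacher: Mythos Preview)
Your proof is correct and, for the upper bound, essentially identical to the paper's: both realise $\omega_F=h(b_F\,\cdot\,)$, bound $\|\omega_F\|$ by $\|b_F^*\|_2$ via Cauchy--Schwarz, and evaluate that $2$-norm using the orthogonality relations (the paper writes this as $\|\sum\dim_q(\alpha)\uprho_{\alpha,i}U^\alpha_{i,i}\|_2^2$, which is exactly your $h(b_Fb_F^*)$). For the lower bound the paper instead tests $\omega_F$ against the normalised character $\chi_\alpha/\|\chi_\alpha\|$ and computes $|\omega_F(\chi_\alpha)|=\dim(\alpha)$; your idempotent argument is a cleaner, computation-free alternative that works in any Banach algebra.
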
 

\begin{proof}
Recall that $p_F=\wh\lambda(\omega_F)$ (equation \eqref{eq10}). Let $\|x\|_2=h(x^* x)^{1/2}\,(x\in\LL^{\infty}(\wh\bbGamma))$ be the $2$-norm on $\Linfd$. Using orthogonality relations (\cite[Theorem 1.4.3]{NeshveyevTuset}) we see
\[
\begin{split}
\|p_F\|_{\A(\bbGamma)}^2=\|\omega_F\|^2&\le 
\bigl\| 
\sum_{\alpha\in F}\sum_{i=1}^{\dim(\alpha)} \dim_q(\alpha) \uprho_{\alpha,i} U^{\alpha }_{i,i}\bigr\|_2^2=
\sum_{\alpha\in F}\sum_{i=1}^{\dim(\alpha)}
\dim_q(\alpha)^2 {\uprho_{\alpha,i}}^2 \|U^{\alpha }_{i,i}\|_2^2\\
&=
\sum_{\alpha\in F}\sum_{i=1}^{\dim(\alpha)}
\dim_q(\alpha)^2 {\uprho_{\alpha,i}}^2 \tfrac{1}{\dim_q(\alpha)\uprho_{\alpha,i}}=\sum_{\alpha\in F}\dim_q(\alpha)^2.
\end{split}
\]
For the lower bound, choose $\alpha\in F$ and let $\chi_\alpha=\sum_{i=1}^{\dim(\alpha)} U^{\alpha}_{i,i}$ be character of $\alpha$. Then $\|\chi_\alpha\|\le \dim(\alpha)$ and 
\[
\|p_F\|_{\A(\bbGamma)}=\|\omega_F\|\ge \bigl|\omega_F(\tfrac{\chi_\alpha}{\|\chi_\alpha\|})\bigr|=
\tfrac{1}{\|\chi_\alpha\|}
\bigl|\sum_{i=1}^{\dim(\alpha)}
\dim_q(\alpha)\uprho_{\alpha,i} h(U^{\alpha *}_{i,i}\chi_\alpha)\bigr|=
\tfrac{\dim(\alpha)}{\|\chi_\alpha\|}\ge 1.
\]
\end{proof}

The next lemma shows intuitively that one can correct an almost equality $a\approx \I$ over a finite set $F\subseteq \Irr(\wh\bbGamma)$ to an actual equality, with an error over which we have precise control.

\begin{lemma}\label{lemma2}
Let $\bbGamma$ be a discrete quantum group, $\eps>0$, $\emptyset\neq F\subseteq \Irr(\wh\bbGamma)$ finite set and $a\in \M^l_{cb}(\A(\bbGamma))$. Assume that $\|\Theta^l(a)(x)-x\|\le \eps \|x\|$ for $x\in \Pol_F(\wh\bbGamma)$. Then there is $\tilde{a}\in \M^l_{cb}(\A(\bbGamma))$ such that $\Theta^l(\tilde{a})(x)=x$ for $x\in \Pol_F(\wh\bbGamma)$, $\tilde{a}-a\in \mrm{c}_{00}(\bbGamma)$ and $\|\tilde{a}-a\|_{\A(\bbGamma)}\le \eps \sum_{\alpha\in F}\dim_q(\alpha)^2$. If $a\in \mc{Z} \M^l_{cb}(\A(\bbGamma))$, then we can take $\tilde{a}\in \mc{Z}\M^l_{cb}(\A(\bbGamma))$.
\end{lemma}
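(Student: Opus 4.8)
The idea is to correct $a$ on the finite-dimensional piece $\Pol_F(\wh\bbGamma)$ by a finitely supported perturbation. Recall that $\Theta^l(a)$ leaves each $\Pol_\alpha(\wh\bbGamma)$ invariant and that its restriction there depends only on the block $a_\alpha$; concretely $(\id-\Theta^l(a))(U^\alpha_{i,j})=\sum_{k=1}^{\dim(\alpha)}(\I_\alpha-a_\alpha)_{i,k}U^\alpha_{k,j}$ (up to a transpose of indices, immaterial below), so $\Theta^l(a)|_{\Pol_\alpha(\wh\bbGamma)}=\id$ exactly when $a_\alpha=\I_\alpha$. Since $p_F=\sum_{\alpha\in F}p_\alpha$ is a central projection of $\ell^\infty(\bbGamma)$, I would set
\[
\tilde a:=a+p_F-p_Fa=a+(\I-a)p_F .
\]
Then $\tilde a-a=p_F-p_Fa\in\mrm{c}_{00}(\bbGamma)$ because $\mrm{c}_{00}(\bbGamma)$ is an ideal of $\ell^\infty(\bbGamma)$ containing $p_F$; hence $\tilde a=a+(p_F-p_Fa)\in\M^l_{cb}(\A(\bbGamma))$ (using $\mrm{c}_{00}(\bbGamma)\subseteq\A(\bbGamma)\subseteq\M^l_{cb}(\A(\bbGamma))$), and if $a$ is scalar on every block $\B(\msf H_\alpha)$ then so is $p_F-p_Fa$, so $\tilde a\in\mc{Z}\M^l_{cb}(\A(\bbGamma))$. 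Finally $p_F^2=p_F$ gives $p_F\tilde a=p_F$, i.e.\ $\tilde a_\alpha=\I_\alpha$ for all $\alpha\in F$, whence $\Theta^l(\tilde a)(x)=x$ for every $x\in\Pol_F(\wh\bbGamma)$ by the formula above. Only the norm bound for $\tilde a-a$ remains.

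Write $p_F-p_Fa=\wh\lambda(\omega')$ with $\omega'\in\LL^1(\wh\bbGamma)$, so $\|\tilde a-a\|_{\A(\bbGamma)}=\|\omega'\|_{\LL^1(\wh\bbGamma)}$. Since $p_F-p_Fa$ is supported on $F$ with $\alpha$-block $\I_\alpha-a_\alpha$, solving the Schur orthogonality relations --- exactly as in \eqref{eq10}, which is the case $a=0$ --- exhibits $\omega'$ as $\omega'=h(w^*\,\cdot\,)$ for an element $w\in\Pol_F(\wh\bbGamma)$ satisfying
\[
\|w\|_2^2=\sum_{\alpha\in F}\dim_q(\alpha)\,\Tr\!\bigl(\uprho_\alpha\,|\I_\alpha-a_\alpha|^2\bigr)\ \le\ \sum_{\alpha\in F}\dim_q(\alpha)^2\,\|\I_\alpha-a_\alpha\|_{\HS}^2 ,
\]
the inequality using $\uprho_\alpha\le\Tr(\uprho_\alpha)\I_\alpha=\dim_q(\alpha)\I_\alpha$; and a Cauchy--Schwarz estimate for $h$ gives $\|\omega'\|_{\LL^1(\wh\bbGamma)}=\|h(w^*\,\cdot\,)\|\le\|w\|_2$ (both points copy the proof of Lemma \ref{lemma1}). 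So it is enough to show $\|\I_\alpha-a_\alpha\|_{\HS}\le\eps\,\dim_q(\alpha)$ for each $\alpha\in F$.

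For this I would test the hypothesis on the matrix coefficients of $U^\alpha$ themselves. For $\alpha\in F$ and all $i,j$ one has $(\id-\Theta^l(a))(U^\alpha_{i,j})=\sum_k(\I_\alpha-a_\alpha)_{i,k}U^\alpha_{k,j}$ while $\|U^\alpha_{i,j}\|_{\CGd}\le\|U^\alpha\|=1$, so the assumption yields $\bigl\|\sum_k(\I_\alpha-a_\alpha)_{i,k}U^\alpha_{k,j}\bigr\|_{\CGd}\le\eps$. Bounding the \cst-norm from below by the Hilbert seminorm $y\mapsto h(yy^*)^{1/2}$ and using $h\bigl(U^\alpha_{k,j}(U^\alpha_{k',j})^*\bigr)=\delta_{k,k'}\,\uprho_{\alpha,j}/\dim_q(\alpha)$, this gives $\dfrac{\uprho_{\alpha,j}}{\dim_q(\alpha)}\sum_k|(\I_\alpha-a_\alpha)_{i,k}|^2\le\eps^2$ for all $i,j$. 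Choosing $j$ with $\uprho_{\alpha,j}$ maximal, summing over $i$, and using $\dim(\alpha)\,\max_j\uprho_{\alpha,j}\ge\Tr(\uprho_\alpha)=\dim_q(\alpha)$, we obtain $\|\I_\alpha-a_\alpha\|_{\HS}^2\le\dim(\alpha)^2\eps^2\le\dim_q(\alpha)^2\eps^2$. Feeding this back, $\|w\|_2^2\le\eps^2\sum_{\alpha\in F}\dim_q(\alpha)^4\le\eps^2\bigl(\sum_{\alpha\in F}\dim_q(\alpha)^2\bigr)^2$, so $\|\tilde a-a\|_{\A(\bbGamma)}\le\eps\sum_{\alpha\in F}\dim_q(\alpha)^2$, as claimed.

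The genuinely delicate step is the inequality $\|\I_\alpha-a_\alpha\|_{\HS}\le\eps\,\dim_q(\alpha)$: the hypothesis controls $\Theta^l(a)-\id$ on $\Pol_F(\wh\bbGamma)$ only in the operator norm of $\CGd$, not in cb norm, so the sole leverage is $\|U^\alpha\|=1$ together with the orthogonality relations. What makes the constants come out is to test against a single column of $U^\alpha$ with the largest possible normalising weight $\uprho_{\alpha,j}$, rather than against all of $U^\alpha$ at once. One also has to pay a little attention to which of the two orthogonality relations (equivalently, $h(y^*y)$ versus $h(yy^*)$) is used at which step, since this depends on the convention for the slot carrying $\uprho_\alpha$, but the argument is insensitive to that choice --- in fact $\dim_q(\alpha)$ may be replaced by $\dim(\alpha)$ throughout.
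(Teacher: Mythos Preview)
Your construction of $\tilde a=a+(\I-a)p_F$ coincides with the paper's, and your argument for the norm bound is correct. The difference lies only in how you estimate $\|\tilde a-a\|_{\A(\bbGamma)}$. You compute the Fourier expansion of $(\I-a)p_F$ explicitly as $\wh\lambda(h(w^*\,\cdot\,))$, bound $\|w\|_2$ in terms of the Hilbert--Schmidt norms $\|\I_\alpha-a_\alpha\|_{\HS}$, and then control each of these by applying the hypothesis to the individual coefficients $U^\alpha_{i,j}$ together with the orthogonality relations. The paper instead writes $(\I-a)p_F=\wh\lambda\bigl(\Theta^l(\I-a)_*(\omega_F)\bigr)$ and estimates by duality: for $\|x\|\le 1$ one has $\Theta^l(p_F)(x)\in\Pol_F(\wh\bbGamma)$, so the hypothesis gives $\|\Theta^l(\I-a)\Theta^l(p_F)(x)\|\le\eps\|\Theta^l(p_F)(x)\|$, whence $\|\tilde a-a\|_{\A(\bbGamma)}\le\eps\,\|\Theta^l(p_F)\|\,\|\omega_F\|\le\eps\,\|p_F\|_{\A(\bbGamma)}^2$, and Lemma~\ref{lemma1} closes the estimate. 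The paper's route is shorter, avoids any coefficient bookkeeping or convention-chasing for the orthogonality relations, and uses the hypothesis exactly once on a generic element of $\Pol_F(\wh\bbGamma)$; your route is more hands-on and, as you note, actually yields the blockwise inequality $\|\I_\alpha-a_\alpha\|_{\HS}\le\eps\dim(\alpha)$, so that one could replace $\dim_q(\alpha)$ by $\dim(\alpha)$ in parts of the final bound.
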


\begin{proof}
Write $a=(a_\alpha)_{\alpha\in\Irr(\wh\bbGamma)}$. Define $b=\sum_{\alpha\in F} (p_\alpha - a_\alpha)\in \mrm{c}_{00}(\bbGamma)$ and $\tilde{a}=a+b$. Since $\tilde{a}_\alpha=p_\alpha\,(\alpha\in F)$, we have $\Theta^l(\tilde{a})(x)=x$ for $x\in \Pol_F(\wh\bbGamma)$. Furthermore
\[
\tilde{a}-a=b=
\sum_{\alpha\in F} (p_\alpha - a_\alpha)=
(\I - a)\sum_{\alpha\in F} p_\alpha=
(\I-a)p_F=(\I-a)\wh\lambda(\omega_F)=
\wh\lambda\bigl( \Theta^l(\I-a)_*(\omega_F)\bigr).
\]
Consequently, using the facts that $\Theta^l(p_F)_*(\omega_F)=\omega_F$, $\Theta^l(p_F)(x)\in \Pol_F(\wh\bbGamma)$ for $x\in \Linfd$ and $p_F$ is central
\[\begin{split}
&\quad\;
\|\tilde{a}-a\|_{\A(\bbGamma)}=
\|\Theta^l(\I - a)_*( \omega_F)\|=
\sup_{x\in \LL^{\infty}(\wh\bbGamma),\|x\|=1}\!
\bigl| \la x,\Theta^l(\I-a)_*(\omega_F) \ra \bigr|\\
&=
\sup_{x\in \LL^{\infty}(\wh\bbGamma),\|x\|=1}\!
\bigl| \la x,\Theta^l(\I-a)_*\Theta^l(p_F)_*(\omega_F) \ra \bigr|=
\sup_{x\in \LL^{\infty}(\wh\bbGamma),\|x\|=1}\!
\bigl| \la \Theta^l(\I-a) \Theta^l(p_F)(x),\omega_F \ra \bigr|\\
&\le 
\sup_{x\in \LL^{\infty}(\wh\bbGamma),\|x\|=1}\!
\| \Theta^l(p_F)(x)- \Theta^l(a) (\Theta^l(p_F)(x))\|\|\omega_F \|\le
\sup_{x\in \LL^{\infty}(\wh\bbGamma),\|x\|=1}\!
\eps  \|\Theta^l(p_F)(x)\|\|\omega_F\|\\
&=
\eps \|\omega_F\| \|\Theta^l(p_F)\|\le 
\eps \|p_F\|_{\A(\bbGamma)} \|p_F\|_{cb}\le 
\eps \|p_F\|_{\A(\bbGamma)}^2,
\end{split}\]
hence the first claim follows from Lemma \ref{lemma1}. If $a$ is central, then so is $b$ and consequently $\tilde{a}$.
\end{proof}
 Let us remark that using \cite[Corollary 2.2.4]{EffrosRuan} one can obtain a better bound for $\|\tilde{a}-a\|_{cb}$ -- we will however not need this. Our main result, in the language of modules, is the following.

\begin{proposition}\label{prop2}
Let $\bbGamma_1,\bbGamma_2$ be discrete quantum groups and $\bbGamma=\bbGamma_1\times \bbGamma_2$ their product. Then
\begin{equation}\label{eq1}
{}_{\Ljd}\Lambda_{cb}(\CGd)=
{}_{\LL^1(\wh\bbGamma_1)}\Lambda_{cb}(\mrm{C}(\wh\bbGamma_1))\;
{}_{\LL^1(\wh\bbGamma_2)}\Lambda_{cb}(\mrm{C}(\wh\bbGamma_2))
\end{equation}
and
\begin{equation}\label{eq6}
{}_{\Ljd}\Lambda_{cb,\Ljd}(\CGd)=
{}_{\LL^1(\wh\bbGamma_1)}\Lambda_{cb,\LL^1(\wh\bbGamma_2)}(\mrm{C}(\wh\bbGamma_1))\;
{}_{\LL^1(\wh\bbGamma_2)}\Lambda_{cb,\LL^1(\wh\bbGamma_2)}(\mrm{C}(\wh\bbGamma_2)).
\end{equation}
\end{proposition}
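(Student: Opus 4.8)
The plan is to prove both inequalities ($\le$ and $\ge$) separately, and to handle the module and bimodule cases in parallel since the arguments differ only by keeping track of the second module action. Throughout I will freely use the identifications from Section~\ref{sec:preliminaries}: $\CGd=\mrm{C}(\wh\bbGamma_1)\otimes\mrm{C}(\wh\bbGamma_2)$, $\Ljd=\LL^1(\wh\bbGamma_1)\wh\otimes\LL^1(\wh\bbGamma_2)$, $\Irr(\wh\bbGamma)=\{\alpha\boxtimes\beta\}$, and the characterisation of left $\Ljd$-module CB maps as $\Theta^l(a)$ with $a$ a multiplier.

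\textbf{The easy inequality $\le$.} Suppose $(\vp_\lambda)_{\lambda\in\Lambda_1}$ and $(\psi_\mu)_{\mu\in\Lambda_2}$ are nets realising ${}_{\LL^1(\wh\bbGamma_1)}\Lambda_{cb}(\mrm{C}(\wh\bbGamma_1))$ and ${}_{\LL^1(\wh\bbGamma_2)}\Lambda_{cb}(\mrm{C}(\wh\bbGamma_2))$ respectively (attainability was noted after Definition~\ref{def2}). Then $\vp_\lambda\otimes\psi_\mu$ is a finite rank CB map on $\mrm{C}(\wh\bbGamma_1)\otimes\mrm{C}(\wh\bbGamma_2)$ with $\|\vp_\lambda\otimes\psi_\mu\|_{cb}\le\|\vp_\lambda\|_{cb}\|\psi_\mu\|_{cb}$, and it is a left $\Ljd$-module map because the module action on the tensor product is the tensor of the two actions (and $\LL^1(\wh\bbGamma_i)$-module-ness of each factor passes to the tensor product). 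Indexing over $\Lambda_1\times\Lambda_2$ and using that $\Pol(\wh\bbGamma_1)\odot\Pol(\wh\bbGamma_2)$ is dense in $\CGd$ together with uniform boundedness, one gets point-norm convergence to $\id$, hence ${}_{\Ljd}\Lambda_{cb}(\CGd)\le{}_{\LL^1(\wh\bbGamma_1)}\Lambda_{cb}(\mrm{C}(\wh\bbGamma_1)){}_{\LL^1(\wh\bbGamma_2)}\Lambda_{cb}(\mrm{C}(\wh\bbGamma_2))$. For the bimodule version one uses the same construction, noting that the tensor of two bimodule maps is a bimodule map for the tensor bimodule structure.

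\textbf{The hard inequality $\ge$.} This is where the modification of \cite[Theorem 12.3.13]{BrownOzawa} enters, and it is the main obstacle. Write $C={}_{\Ljd}\Lambda_{cb}(\CGd)$; I may assume $C<\infty$. Fix finite $\emptyset\neq F_1\subseteq\Irr(\wh\bbGamma_1)$, $\emptyset\neq F_2\subseteq\Irr(\wh\bbGamma_2)$ and $\eps>0$. By Lemma~\ref{lemma3} applied to $\wh\bbGamma=\wh\bbGamma_1\times\wh\bbGamma_2$ with the set $F_1\boxtimes F_2$, there is a finite rank $\vp\in{}_{\Ljd}\CB(\CGd)$ with $\|\vp\|_{cb}\le C$ and $\|\vp(x)-x\|\le\eps\|x\|$ for $x\in\Pol_{F_1\boxtimes F_2}(\wh\bbGamma)=\Pol_{F_1}(\wh\bbGamma_1)\odot\Pol_{F_2}(\wh\bbGamma_2)$. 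Writing $\vp=\Theta^l(a)$ for a multiplier $a$ and applying Lemma~\ref{lemma2}, I can upgrade $\vp$ to a map $\wt\vp=\Theta^l(\wt a)$ which is \emph{exactly} the identity on $\Pol_{F_1\boxtimes F_2}(\wh\bbGamma)$, with $\|\wt\vp\|_{cb}\le C+\eps\sum_{\alpha\boxtimes\beta\in F_1\boxtimes F_2}\dim_q(\alpha\boxtimes\beta)^2=C+\eps\bigl(\sum_{\alpha\in F_1}\dim_q(\alpha)^2\bigr)\bigl(\sum_{\beta\in F_2}\dim_q(\beta)^2\bigr)=:C+\eps M_{F_1,F_2}$ (using $\dim_q(\alpha\boxtimes\beta)=\dim_q(\alpha)\dim_q(\beta)$). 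Now I want to ``slice'' $\wt\vp$ into a map on the first factor. The idea, following Brown--Ozawa, is: pick finite rank slice maps. Concretely, using that $\Pol_{F_2}(\wh\bbGamma_2)$ is finite dimensional, pick a basis $\{e_s\}$ of $\Pol_{F_2}(\wh\bbGamma_2)$ with dual functionals $\{e_s^*\}\subseteq\Pol_{F_2}(\wh\bbGamma_2)^*\subseteq\mrm{C}(\wh\bbGamma_2)^*$, and for a suitable unit vector / state $\nu$ witnessing the norm, consider $\vp_1:=(\id\otimes\nu)\circ\wt\vp\circ(\id\otimes\iota)$ where $\iota$ is a completely isometric embedding of $\mrm{C}(\wh\bbGamma_1)$ into $\mrm{C}(\wh\bbGamma)$ of the form $y\mapsto y\otimes b_0$ for a fixed norm-one positive element $b_0$ in a matrix amplification — this is the step that needs care to keep the CB norm controlled and the module structure intact. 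The cleaner route, and the one I expect the paper takes, is the Lemma~\ref{lemma5} / Hahn--Banach characterisation: translate ``good approximating module map exists on $\CGd$'' into the estimate $|\kappa(u)|\le C\|q(u)\|$ on $F_1\boxtimes F_2$-pieces (via Lemma~\ref{lemma5} with $A=B=\Ljd$ or the relevant one-sided version), use that $\mrm{C}(\wh\bbGamma)\wh\otimes_{\Ljd}(\cdot)$ factors through a tensor product of the two module tensor products, thereby splitting the quotient norm $\|q(u)\|$ as a product over the two factors, and then read back (again via Lemma~\ref{lemma5}) that each factor $\mrm{C}(\wh\bbGamma_i)$ admits module maps with CB norm $\le C^{1/2+o(1)}$ — or rather that the product of the two factors' constants is $\le C$. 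The genuinely delicate point is getting the quotient norm on $X\wh\otimes_{A^{op}\wh\otimes B}E^*$ to factor as a product for a tensor-product bimodule; this is exactly the place where Brown--Ozawa use a Grothendieck-type / operator-space tensor argument and where the module structure must be threaded through, and I expect it to require the observation that $\Pol_{F_i}(\wh\bbGamma_i)$ carries its own ``local unit'' $\omega$ with $\omega\star x=x\star\omega=x$ (as in Remark~\ref{rem3}) so that finite-dimensional submodules behave well.

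\textbf{Finishing.} Having produced, for each $(\eps,F_1,F_2)$, finite rank left $\LL^1(\wh\bbGamma_i)$-module maps $\vp_i^{(\eps,F_1,F_2)}$ on $\mrm{C}(\wh\bbGamma_i)$ that are close to the identity on $\Pol_{F_i}(\wh\bbGamma_i)$ and whose CB norms multiply to at most $C+\eps M_{F_1,F_2}(1+o(1))$, I invoke the $(2)\Rightarrow(1)$ direction of Lemma~\ref{lemma3} for each factor: letting $\eps\to 0$ and $F_1,F_2\nearrow\Irr$ along the directed set, the products of CB norms are squeezed down to $C$, giving ${}_{\LL^1(\wh\bbGamma_1)}\Lambda_{cb}(\mrm{C}(\wh\bbGamma_1)){}_{\LL^1(\wh\bbGamma_2)}\Lambda_{cb}(\mrm{C}(\wh\bbGamma_2))\le C$. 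A small bookkeeping point: one must be careful that the ``splitting'' yields a genuine bound on the \emph{product} of the two constants rather than on each individually (the two factors may be unbalanced), which is handled exactly as in Brown--Ozawa by optimising the choice of the auxiliary state/element used in the slicing. The bimodule identity \eqref{eq6} is proved by the identical argument, carrying the right $\LL^1(\wh\bbGamma_i)$-action through every step: Lemma~\ref{lemma5} is already stated for bimodules, Lemma~\ref{lemma2} already produces central $\wt a$ when $a$ is central, and the tensor-product construction in the easy direction manifestly respects both actions. Combining $\le$ and $\ge$ gives \eqref{eq1} and \eqref{eq6}.
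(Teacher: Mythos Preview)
Your easy inequality is fine and matches the paper. The hard inequality, however, has a structural gap: you have correctly identified every ingredient (Lemma~\ref{lemma5}, Lemma~\ref{lemma2}, the local unit of Remark~\ref{rem3}) but you have not assembled them into a working argument, and neither of the two routes you sketch actually closes.

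The slicing route (``$\vp_1=(\id\otimes\nu)\circ\wt\vp\circ(\id\otimes\iota)$'') gives you a single map on each factor with CB norm $\le\|\wt\vp\|_{cb}$; it does not give two maps whose CB norms \emph{multiply} to $\le\|\wt\vp\|_{cb}$, and no optimisation over $\nu$ will produce that. Brown--Ozawa do not slice; their Theorem~12.3.13 is proved by contradiction via the Hahn--Banach characterisation, so ``handled exactly as in Brown--Ozawa'' is not a valid pointer here.

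In the Lemma~\ref{lemma5} route you say you will ``split the quotient norm as a product'' and then ``read back'' bounds on the factors. But the only inequality one can actually prove (and the one the paper proves, using the local unit $\omega_{E_k}$) is the \emph{sub}multiplicativity
\[
\|q(u_1\times u_2)\|_{\mrm{C}(\wh\bbGamma)\wh\otimes_A\Pol_E(\wh\bbGamma)^*}\ \le\ \|q(u_1)\|\ \|q(u_2)\|,
\]
together with the multiplicativity $\kappa(u_1\times u_2)=\kappa(u_1)\kappa(u_2)$. For a direct argument you would need the \emph{opposite} inequality on $q$, and you would also need to know in advance how to apportion $C$ between the two factors (your own ``$C^{1/2+o(1)}$'' aside shows the difficulty). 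The paper resolves both issues by arguing \emph{by contradiction}: assume $C_1C_2>{}_{\Ljd}\Lambda_{cb}(\CGd)$ with each $C_k$ strictly below the $k$-th factor's constant; then Lemma~\ref{lemma5} fails for each factor, producing witnesses $u_k$ with $|\kappa(u_k)|>C_k\|q(u_k)\|$ (one must also check $q(u_k)\neq 0$, which again uses the local unit). Setting $u=u_1\times u_2$ and combining multiplicativity of $\kappa$ with submultiplicativity of $\|q(\cdot)\|$ yields $|\kappa(u)|>C_1C_2\|q(u)\|$, contradicting the inequality $|\kappa(u)|\le C_1C_2\|q(u)\|$ obtained from Lemma~\ref{lemma5} on the product (after the Lemma~\ref{lemma2} upgrade you already described, with $\eps$ chosen so the upgraded CB norm is exactly $C_1C_2$). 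The contradiction framework is not cosmetic: it is precisely what lets the submultiplicative inequality point in the right direction and what eliminates the need to choose $C_1,C_2$ a priori.
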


\begin{proof}
The easier inequality $\le$ was already established in \cite[Proposition 3.2]{FreslonPermanence} (after conjunction with Theorem \ref{thm3}), let us give an essentially equivalent argument for the convenience of the reader. Recall that $\CGd=\mrm{C}(\wh\bbGamma_1)\otimes \mrm{C}(\wh\bbGamma_2)$ as \cst-algebras and $\Ljd=\LL^1(\wh\bbGamma_1)\wh\otimes \LL^1(\wh\bbGamma_2)$ as completely contractive Banach algebras. It is enough to assume that both ${}_{\LL^1(\wh\bbGamma_1)}\Lambda_{cb}(\mrm{C}(\wh\bbGamma_1))$ and ${}_{\LL^1(\wh\bbGamma_2)}\Lambda_{cb}(\mrm{C}(\wh\bbGamma_2))$ are finite, let $(\vp_\lambda)_{\lambda\in \Lambda}$ and $(\psi_\mu)_{\mu\in \Sigma}$ be the corresponding maps. Then we can construct new net $(\vp_\lambda\otimes \psi_\mu)_{(\lambda,\mu)\in \Lambda\times \Sigma}$ of finite rank maps in ${}_{\LL^1(\wh\bbGamma_1)\wh\otimes \LL^1(\wh\bbGamma_2)}\CB(\mrm{C}(\wh\bbGamma_1)\otimes \mrm{C}(\wh\bbGamma_2))$ (\cite[Proposition 8.1.5]{EffrosRuan}). For any $\lambda,\mu$ we have $\|\vp_\lambda\otimes \psi_\mu\|_{cb}\le {}_{\LL^1(\wh\bbGamma_1)}\Lambda_{cb}(\mrm{C}(\wh\bbGamma_1))\, {}_{\LL^1(\wh\bbGamma_2)}\Lambda_{cb}(\mrm{C}(\wh\bbGamma_2))$ and clearly $\vp_\lambda\otimes \psi_\mu\xrightarrow[(\lambda,\mu)\in \Lambda\times \Sigma]{}\id$ on a norm dense set $\mrm{C}(\wh\bbGamma_1)\odot\mrm{C}(\wh\bbGamma_2)\subseteq \CGd$. Consequently $(\vp_\lambda\otimes \psi_\mu)_{(\lambda,\mu)\in\Lambda\times\Sigma}$ converges to $\id$ in the point-norm topology. This allows us to conclude inequality $\le$ in \eqref{eq1}. An analogous reasoning gives inequality $\le $ in \eqref{eq6}: the only difference is that if $\vp_\lambda$ and $\psi_\mu$ are bimodule maps, then so is $\vp_\lambda\otimes \psi_\mu$.\\

Let us now prove the converse inequalities; we will treat both cases at the same time. Assume by contradiction that \eqref{eq1} or \eqref{eq6} does not hold, i.e.
\[
{}_{\LL^1(\wh\bbGamma)}\Lambda_{cb}(\CGd)<
{}_{\LL^1(\wh\bbGamma_1)}\Lambda_{cb}(\mrm{C}(\wh\bbGamma_1))
\;{}_{\LL^1(\wh\bbGamma_2)}\Lambda_{cb}(\mrm{C}(\wh\bbGamma_2))\]
or
\[
{}_{\LL^1(\wh\bbGamma)}\Lambda_{cb,\Ljd}(\CGd)<
{}_{\LL^1(\wh\bbGamma_1)}\Lambda_{cb,\LL^1(\wh\bbGamma_1)}(\mrm{C}(\wh\bbGamma_1))
\;{}_{\LL^1(\wh\bbGamma_2)}\Lambda_{cb,\LL^1(\wh\bbGamma_2)}(\mrm{C}(\wh\bbGamma_2)).
\] Then we can choose positive constants $C_1,C_2$ such that
\begin{align}
{}_{\LL^1(\wh\bbGamma)}\Lambda_{cb}(\CGd)&<
C_1C_2,\label{eq5a}\\
1\le C_1 <{}_{\LL^1(\wh\bbGamma_1)}\Lambda_{cb}(\mrm{C}(\wh\bbGamma_1))&,\quad
1\le C_2<{}_{\LL^1(\wh\bbGamma_2)}\Lambda_{cb}(\mrm{C}(\wh\bbGamma_2))\label{eq5b}
\end{align}
in the left module case and
\begin{align}
{}_{\LL^1(\wh\bbGamma)}\Lambda_{cb,\Ljd}(\CGd)&<
C_1C_2,\label{eq5c}\\
1\le C_1 <{}_{\LL^1(\wh\bbGamma_1)}\Lambda_{cb,\LL^1(\wh\bbGamma_1)}(\mrm{C}(\wh\bbGamma_1))&,\quad
1\le C_2<{}_{\LL^1(\wh\bbGamma_2)}\Lambda_{cb,\LL^1(\wh\bbGamma_2)}(\mrm{C}(\wh\bbGamma_2))\label{eq5d}
\end{align}
in the bimodule case.

In order to easier work with both cases at the same time, it will be convenient to reformulate the situation slightly. As discussed in Section \ref{sec:preliminaries}, left $\Ljd$-module structure on $\CGd$ gives us right $\Ljd^{op}$-module structure. Similarly, $\Ljd$-bimodule structure can also be encoded as right $\LL^1(\wh\bbGamma)^{op}\wh\otimes \Ljd$-module structure. Thus from now on, let $A$ be equal to $\Ljd^{op}$ or $\Ljd^{op}\wh\otimes \Ljd$, and consider $\CGd$ as a right $A$-module. Similarly for quantum groups $\bbGamma_1,\bbGamma_2$ consider $\mrm{C}(\wh\bbGamma_k)$ as a right $A_k$-module, where $A_k=\LL^1(\wh\bbGamma_k)^{op}$ or $A_k=\LL^1(\wh\bbGamma_k)^{op}\wh\otimes \LL^1(\wh\bbGamma_k)$.\\

First we use ``negative'' \eqref{eq5b} (or \eqref{eq5d}). Fix $k\in\{1,2\}$ and use Lemma \ref{lemma3} (or its bimodule version) to find $\eps_k>0$ and a finite set $\emptyset\neq F_k\subseteq \Irr(\wh\bbGamma_k)$ such that for all finite rank maps $\vp\in \CB_{A_k}(\mrm{C}(\wh\bbGamma_k))$ with $\|\vp\|_{\CB(\mrm{C}(\wh\bbGamma_k))}\le C_k$ there is $x\in \Pol_{F_k}(\wh\bbGamma_k)$ with $\|\vp(x)-x\|>\eps_k \|x\|$. In particular
\begin{equation}\label{eq8}
\|\vp\rest_{\Pol_{F_k}(\wh\bbGamma_k)}-\id\|_{\CB(\Pol_{F_k}(\wh\bbGamma_k), \mrm{C}(\wh\bbGamma_k))}>\eps_k.
\end{equation}

Now we use ``positive'' \eqref{eq5a} (or \eqref{eq5c}). Define $F=F_1\boxtimes F_2 \subseteq \Irr(\wh\bbGamma)$ and choose small $\delta>0$ such that
\[
{}_{\LL^1(\wh\bbGamma)}\Lambda_{cb}(\mrm{C}(\wh\bbGamma))<(1-\delta)C_1 C_2<C_1 C_2 \quad\textnormal{or}\quad
{}_{\LL^1(\wh\bbGamma)}\Lambda_{cb,\Ljd}(\mrm{C}(\wh\bbGamma))<(1-\delta)C_1 C_2<C_1 C_2
\]
depending on the version we are considering. Next set $\eps= \tfrac{\delta C_1 C_2}{\sum_{\alpha\in F} \dim_q(\alpha)^2}>0$. For this $\eps$ and $F$, by Lemma \ref{lemma3} we can find finite rank $\vp\in \CB_{A}(\mrm{C}(\wh\bbGamma))$ with $\|\vp\|_{\CB(\CGd)} \le (1-\delta)C_1 C_2$ and $\|\vp(x)-x\| \le \eps \|x\|$ for $x\in \Pol_F(\wh\bbGamma)$. Since $\vp$ is a right $A$-module map, it corresponds to $a\in \mrm{c}_{00}(\bbGamma)$ (or $a\in \mc{Z} \mrm{c}_{00}(\bbGamma)$) via $\vp=\Theta^l(a)\rest_{\CGd}$. Choose $\tilde{a}\in \mrm{c}_{00}(\bbGamma)$ (or $\tilde{a}\in \mc{Z}\mrm{c}_{00}(\bbGamma)$) using Lemma \ref{lemma2}, so that $\Theta^l(\tilde{a})=\id $ on $\Pol_F(\wh\bbGamma)$ and since the CB norm is majorised by Fourier algebra norm
\[
\|\Theta^l(\tilde{a})\rest_{\CGd}\|_{\CB(\CGd)}=\|\tilde{a}\|_{cb}\le 
\|a\|_{cb}+
\|\tilde{a} - a\|_{cb}\le 
(1-\delta) C_1 C_2 + \eps \sum_{\alpha\in F}\dim_q(\alpha)^2= C_1 C_2.
\]
$\Theta^l(\tilde{a})\rest_{\CGd}$ is a finite rank right $A$-module map, hence it has image in $\Pol_E(\wh\bbGamma)$ for some finite $E\subseteq \Irr(\wh\bbGamma)$. By enlarging $E$ if needed, we can assume $E=E_1\boxtimes E_2$ for finite $\emptyset\neq E_k\subseteq \Irr(\wh\bbGamma_k)$ with $F_k\subseteq E_k$. Existence of $\Theta^l(\tilde{a})\rest_{\CGd}$ shows that point $(1)$ of Lemma \ref{lemma5} holds (for modules $\Pol_F(\wh\bbGamma)\subseteq \Pol_E(\wh\bbGamma)$ and constant $C_1 C_2$), consequently $(2)$ of this lemma gives
\begin{equation}\label{eq3}
|\kappa(u)|\le 
C_1 C_2 \|q(u)\|
\end{equation}
 for $u\in \Pol_F(\wh\bbGamma)\odot \Pol_E(\wh\bbGamma)^*$. Here $q$ is the quotient map $\mrm{C}(\wh\bbGamma)\wh\otimes \Pol_E(\wh\bbGamma)^*\rightarrow \mrm{C}(\wh\bbGamma)\wh\otimes_A\Pol_E(\wh\bbGamma)^*$.\\
 
Next we go back to the reasoning concerning $\bbGamma_k$'s. Consider finite dimensional right $A_k$-submodules $\Pol_{F_k}(\wh\bbGamma_k)\subseteq \Pol_{E_k}(\wh\bbGamma_k)$ of $\mrm{C}(\wh\bbGamma_k)$ and numbers $C_k$. We will denote this action and its dual by $ x \vartriangleleft f, f\vartriangleright \omega\;(x\in \mrm{C}(\wh\bbGamma_k),\omega\in \Pol_{E_k}(\wh\bbGamma_k)^*,f\in A_k)$ to avoid confusion. By the reasoning above (inequality \eqref{eq8}), point $(1)$ in Lemma \ref{lemma5} does not hold, and there is $u_k\in \Pol_{F_k}(\wh\bbGamma_k)\odot \Pol_{E_k}(\wh\bbGamma_k)^*$ such that
\begin{equation}\label{eq12}
|\kappa(u_k)|>C_k\|q(u_k)\|.
\end{equation}
We claim that $q(u_k)\neq 0$. To see this, we need to introduce an auxilliary bounded functional. First, observe that we can understand $\Theta^l(p_{E_k})\rest_{\mrm{C}(\wh\bbGamma_k)}$ as a CB map $\mrm{C}(\wh\bbGamma_k)\rightarrow \Pol_{E_k}(\wh\bbGamma_k)$. Next consider its dual map and define $\rho$ to be the composition
\[
\rho
\colon \mrm{C}(\wh\bbGamma_k)\wh\otimes \Pol_{E_k}(\wh\bbGamma_k)^*\xrightarrow[]{
\id\otimes (\Theta^l(p_{E_k})\rest_{\mrm{C}(\wh\bbGamma_k)})^* }
\mrm{C}(\wh\bbGamma_k)\wh\otimes \mrm{C}(\wh\bbGamma_k)^*\xrightarrow[]{\kappa}\CC.
\]
Let us write
\begin{equation}\label{eq11}
u_k=\sum_{i=1}^{N_k} x_{k,i}\otimes \omega_{k,i}\quad\textnormal { for } x_{k,i}\in \Pol_{F_k}(\wh\bbGamma_k)\subseteq \mrm{C}(\wh\bbGamma_k)\;\textnormal { and }\; \omega_{k,i}\in \Pol_{E_k}(\wh\bbGamma_k)^*
\end{equation}
and observe
\begin{equation}\label{eq13}
\la \rho,u_k\ra=
\sum_{i=1}^{N_k} \la \rho,x_{k,i}\otimes\omega_{k,i}\ra=
\sum_{i=1}^{N_k} \la \omega_{k,i},\Theta^l(p_{E_k})(x_{k,i})\ra=
\sum_{i=1}^{N_k} \la \omega_{k,i},x_{k,i}\ra=
\kappa(u_k).
\end{equation}
Assume by contradiction that $q(u_k)=0$, then
\[
u_k\in \ov{\lin}\{x \vartriangleleft f \otimes \omega - x\otimes f \vartriangleright \omega\,|\, x\in \mrm{C}(\wh\bbGamma_k),f\in A_k,\omega\in \Pol_{E_k}(\wh\bbGamma_k)^*\}\subseteq 
\mrm{C}(\wh\bbGamma_k)\wh\otimes \Pol_{E_k}(\wh\bbGamma_k)^*.
\]
Since
\[\begin{split}
\la \rho,
x \vartriangleleft f \otimes \omega - x\otimes f \vartriangleright \omega \ra &=
\la \omega , 
\Theta^l(p_{E_k})( x\vartriangleleft f ) \ra -
\la f\vartriangleright \omega , \Theta^l(p_{E_k})(x)\ra \\
&=
\la \omega , 
\Theta^l(p_{E_k})( x )\vartriangleleft f \ra -
\la f\vartriangleright \omega , \Theta^l(p_{E_k})(x)\ra 
=0
\end{split}\]
for $x\in \mrm{C}(\wh\bbGamma_k),f\in A_k,\omega\in \Pol_{E_k}(\wh\bbGamma_k)^*$, we have $\la \rho,u_k\ra=0$ by continuity of $\rho$. This contradicts \eqref{eq12} and \eqref{eq13}, consequently $q(u_k)\neq 0$.

Let us introduce shuffling map (cf.~\cite[Lemma 12.3.14]{BrownOzawa}) 
\[
\bigl(\mrm{C}(\wh\bbGamma_1)\wh\otimes \Pol_{E_1}(\wh\bbGamma_1)^*\bigr)\times 
\bigl(\mrm{C}(\wh\bbGamma_2)\wh\otimes  \Pol_{E_2}(\wh\bbGamma_2)^*\bigr)\ni (v_1,v_2)\mapsto v_1\times v_2 \in 
\mrm{C}(\wh\bbGamma)\wh\otimes \Pol_E(\wh\bbGamma)^*
\]
given by the bilinear extention of 
\[
(x_1\otimes \omega_1)\times (x_2\otimes \omega_2)=x_1\otimes x_2 \otimes \omega_1\otimes \omega_2
\]
(it is well defined as $E=E_1\boxtimes E_2$ is finite, hence we can identify completely isometrically $\Pol_E(\wh\bbGamma)=\Pol_{E_1}(\wh\bbGamma_1)\wc{\otimes} \Pol_{E_2}(\wh\bbGamma_2)$, where $\wc{\otimes}$ is the injective operator space tensor product \cite[Section 8]{EffrosRuan}). According to \cite[Lemma 12.3.14]{BrownOzawa} we have $\|v_1\times v_2\|\le \|v_1\| \|v_2\|$ for any $v_k\in \mrm{C}(\wh\bbGamma_k)\wh\otimes \Pol_{E_k}(\wh\bbGamma_k)^*$. Consider
\[
u=u_1\times u_2\in \Pol_F(\wh\bbGamma)\odot \Pol_E(\wh\bbGamma)^*\subseteq \mrm{C}(\wh\bbGamma)\wh\otimes \Pol_E(\wh\bbGamma)^*.
\]
We will use this element to obtain a contradiction. Using \eqref{eq11} we have $u=\sum_{i=1}^{N_1} \sum_{j=1}^{N_2}
x_{1,i}\otimes x_{2,j}\otimes \omega_{1,i}\otimes \omega_{2,j}$ and consequently
\[
\kappa(u)=
\sum_{i=1}^{N_1}\sum_{j=1}^{N_2}
\la \omega_{1,i}\otimes \omega_{2,j},x_{1,i}\otimes x_{2,j}\ra =
\sum_{i=1}^{N_1}\sum_{j=1}^{N_2}
\la \omega_{1,i},x_{1,i}\ra \,\la \omega_{2,j}, x_{2,j}\ra =
\kappa(u_1)\kappa(u_2)
\]
and
\begin{equation}\label{eq7}
|\kappa(u)|=
|\kappa(u_1)|\,|\kappa(u_2)|>
C_1 C_2\|q(u_1)\|\,\|q(u_2)\|.
\end{equation}
by \eqref{eq12}. Next we need to get a hold on the norm $\|q(u)\|$, which is the norm in the quotient space $\mrm{C}(\wh\bbGamma)\wh\otimes_{A} \Pol_E(\wh\bbGamma)^*=
(\mrm{C}(\wh\bbGamma)\wh\otimes \Pol_E(\wh\bbGamma)^*)/\ker (q)$. Fix an arbitrary $\eps_0>0$. For $k\in\{1,2\}$ we can choose
\[\begin{split}
n_k&\in \ker (q\colon \mrm{C}(\wh\bbGamma_k)\wh\otimes \Pol_{E_k}(\wh\bbGamma_k)^*\rightarrow \mrm{C}(\wh\bbGamma_k)\wh\otimes_{A_k}\Pol_{E_k}(\wh\bbGamma_k)^*)\\
&=\ov{\lin}\{ n\vartriangleleft f\otimes \nu - n\otimes f\vartriangleright \nu \,|\,n\in\mrm{C}(\wh\bbGamma_k),f\in A_k,\nu\in \Pol_{E_k}(\wh\bbGamma_k)^*\}
\end{split}
\]
such that $\|u_k+n_k\|-\eps_0 \le  \|q(u_k)\|\le \|u_k+n_k\|$. We can write
\[
n_k=\lim_{j\to\infty} \sum_{l=1}^{L_k^j} (n_{k,l}^j \vartriangleleft f_{k,l}^j\otimes \nu_{k,l}^j- n_{k,l}^j\otimes f_{k,l}^j\vartriangleright \nu_{k,l}^j)
\]
for some $n_{k,l}^j\in \mrm{C}(\wh\bbGamma_k), f_{k,l}^j\in A_k, \nu_{k,l}^j\in \Pol_{E_k}(\wh\bbGamma_k)^*$. Then
\[\begin{split}
&\quad\; q(u_1\times n_2)=
\lim_{j\to\infty}
\sum_{i=1}^{N_1} \sum_{l=1}^{L_2^j}
q\bigl(
(x_{1,i}\otimes \omega_{1,i})\times (
n_{2,l}^j\vartriangleleft f_{2,l}^j\otimes \nu_{2,l}^j - 
n_{2,l}^j\otimes f_{2,l}^j\vartriangleright \nu_{2,l}^j
)\bigr)\\
&=
\lim_{j\to\infty}
\sum_{i=1}^{N_1} \sum_{l=1}^{L_2^j}
q\bigl(
x_{1,i}\otimes 
n_{2,l}^j\vartriangleleft f_{2,l}^j\otimes 
\omega_{1,i}\otimes \nu_{2,l}^j
-
x_{1,i}\otimes 
n_{2,l}^j\otimes 
\omega_{1,i}\otimes f_{2,l}^j\vartriangleright  \nu_{2,l}^j
\bigr)\\
&=
\lim_{j\to\infty}
\sum_{i=1}^{N_1} \sum_{l=1}^{L_2^j}
q\bigl(
(x_{1,i}\otimes 
n_{2,l}^j)\vartriangleleft (\omega\otimes f_{2,l}^j)\otimes 
(\omega_{1,i}\otimes \nu_{2,l}^j )
-
(x_{1,i}\otimes 
n_{2,l}^j)\otimes 
(\omega\otimes  f_{2,l}^j)\vartriangleright (\omega_{1,i}\otimes  \nu_{2,l}^j )
\bigr)
\end{split}\]
where $\omega\in \LL^1(\wh\bbGamma_1)$ (or $\omega\in \LL^1(\wh\bbGamma_1)\wh\otimes\LL^1(\wh\bbGamma_1)$) is any normal functional which on $\Pol_{E_1}(\wh\bbGamma_1)$ acts as the counit -- so $x_{1,i}\vartriangleleft \omega =x_{1,i}$ and $\omega\vartriangleright \omega_{1,i}=\omega_{1,i}$. Such functional can be easily constructed using orthogonality relations \cite[Theorem 1.4.3]{NeshveyevTuset}, for example we can take $\omega=\omega_{E_1}$ (or $\omega=\omega_{E_1}\otimes\omega_{E_1}$). It follows that $q(u_1\times n_2)=0$. Similarly we check $q(n_1\times u_2)=0$ and $q(n_1\times n_2)=0$. Consequently
\[q(u)=q(u_1\times u_2)=
q(u_1 \times u_2+
n_1 \times u_2+
u_1 \times n_2+
n_1 \times n_2)\]
so
\[\begin{split}
&\quad\;
\|q(u)\|
\le 
\|u_1 \times u_2+
n_1 \times u_2+
u_1 \times n_2+
n_1 \times n_2
\|=
\|(u_1+n_1)\times (u_2+ n_2)\|\\
&\le  \|u_1+n_2\|\,\|u_2+n_2\|\le (\|q(u_1)\|+\eps_0)(\|q(u_2)\|+\eps_0).
\end{split}\]
Since $\eps_0>0$ was arbitrary, we conclude $\|q(u)\|\le \|q(u_1)\| \|q(u_2)\|$. Combining this with inequalities \eqref{eq3} and \eqref{eq7} we get
\[
C_1 C_2 \|q(u_1)\|\,\|q(u_2)\|<
C_1 C_2 \|q(u_1)\| \,\|q(u_2)\|,
\]
and as $q(u_1)\neq 0, q(u_2)\neq 0$ this gives a contradicition.
\end{proof}

\begin{remark}\label{rem2}
We have formulated and proved Proposition \ref{prop2} only for modules of the form $\CGd$ because of two reasons. First, in the case of $\CGd$ there is a canonical dense submodule $\Pol(\wh\bbGamma)$ whose finite dimensional subspaces give a wealth of finite dimensional submodules. Another reason is that for any finite $\emptyset\neq E\subseteq \Irr(\wh\bbGamma)$ one can find $\omega\in \Ljd$ which acts as the identity on $\Pol_E(\wh\bbGamma)$. This ``local unitality'' property was used to obtain bound $\|q(u_1\times u_2)\|\le \|q(u_1)\|\,\|q(u_2)\|$.
\end{remark}

\begin{theorem}\label{thm2}
Let $\bbGamma_1,\bbGamma_2$ be discrete quantum groups and $\bbGamma=\bbGamma_1\times \bbGamma_2$ their product. Then
\[
\Lambda_{cb}(\bbGamma)=
\Lambda_{cb}(\bbGamma_1)\,\Lambda_{cb}(\bbGamma_2)
\quad
\textnormal{and}\quad
\mc{Z}\Lambda_{cb}(\bbGamma)=
\mc{Z} \Lambda_{cb}(\bbGamma_1)\,\mc{Z}\Lambda_{cb}(\bbGamma_2).
\]
\end{theorem}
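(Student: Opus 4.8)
The plan is to derive Theorem~\ref{thm2} directly from the already-established results in the paper, so essentially no new computation is needed. First I would combine Theorem~\ref{thm3} and Proposition~\ref{prop1}, which together identify $\Lambda_{cb}(\bbGamma)$ with ${}_{\Ljd}\Lambda_{cb}(\CGd)$ and $\mc{Z}\Lambda_{cb}(\bbGamma)$ with ${}_{\Ljd}\Lambda_{cb,\Ljd}(\CGd)$, for any discrete quantum group $\bbGamma$. Applying this to $\bbGamma=\bbGamma_1\times\bbGamma_2$ as well as to each factor $\bbGamma_k$ reduces the two claimed multiplicativity identities to exactly the module-level identities \eqref{eq1} and \eqref{eq6} of Proposition~\ref{prop2}.

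Concretely, for the first identity I would write
\[
\Lambda_{cb}(\bbGamma)=
{}_{\Ljd}\Lambda_{cb}(\CGd)
\overset{\eqref{eq1}}{=}
{}_{\LL^1(\wh\bbGamma_1)}\Lambda_{cb}(\mrm{C}(\wh\bbGamma_1))\;
{}_{\LL^1(\wh\bbGamma_2)}\Lambda_{cb}(\mrm{C}(\wh\bbGamma_2))
=\Lambda_{cb}(\bbGamma_1)\,\Lambda_{cb}(\bbGamma_2),
\]
where the first and last equalities are Theorem~\ref{thm3}. For the central version, I would similarly chain
\[
\mc{Z}\Lambda_{cb}(\bbGamma)=
{}_{\Ljd}\Lambda_{cb,\Ljd}(\CGd)
\overset{\eqref{eq6}}{=}
{}_{\LL^1(\wh\bbGamma_1)}\Lambda_{cb,\LL^1(\wh\bbGamma_1)}(\mrm{C}(\wh\bbGamma_1))\;
{}_{\LL^1(\wh\bbGamma_2)}\Lambda_{cb,\LL^1(\wh\bbGamma_2)}(\mrm{C}(\wh\bbGamma_2))
=\mc{Z}\Lambda_{cb}(\bbGamma_1)\,\mc{Z}\Lambda_{cb}(\bbGamma_2),
\]
again using Theorem~\ref{thm3} at both ends (here one should note the harmless typo in \eqref{eq6}, whose right-hand side is meant to involve $\mrm{C}(\wh\bbGamma_1)$ with its $\LL^1(\wh\bbGamma_1)$-bimodule structure in the first factor).

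There is essentially no obstacle at this stage: all the substantive work---the ``easy'' direction via the tensor product of approximating nets (from \cite{FreslonPermanence}) and the ``hard'' direction via the Hahn--Banach/duality argument adapted from \cite[Theorem 12.3.13]{BrownOzawa}---has already been carried out in Proposition~\ref{prop2}, and the translation between the quantum-group constants and the module constants is the content of Theorem~\ref{thm3}. The only point requiring a word of care is that Theorem~\ref{thm3} and Proposition~\ref{prop1} must be invoked for the product $\bbGamma$ \emph{and} for each factor, and that the identification $\CGd=\mrm{C}(\wh\bbGamma_1)\otimes\mrm{C}(\wh\bbGamma_2)$, $\Ljd=\LL^1(\wh\bbGamma_1)\wh\otimes\LL^1(\wh\bbGamma_2)$ is the one built into the statement of Proposition~\ref{prop2}; both are already in place. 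Hence the proof is a two-line citation argument.
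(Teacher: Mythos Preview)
Your proposal is correct and matches the paper's own proof exactly: the paper also just states that the result is an immediate consequence of Proposition~\ref{prop2} and Theorem~\ref{thm3}. The reference to Proposition~\ref{prop1} is harmless but unnecessary, since Theorem~\ref{thm3} already gives the identifications $\Lambda_{cb}(\bbGamma)={}_{\Ljd}\Lambda_{cb}(\CGd)$ and $\mc{Z}\Lambda_{cb}(\bbGamma)={}_{\Ljd}\Lambda_{cb,\Ljd}(\CGd)$ directly.
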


\begin{proof}
This result is an immediate consequence of Proposition \ref{prop2} and Theorem \ref{thm3}.
\end{proof}

As a corollary, we extend this result to infinite direct sums. Let $(\bbGamma_i)_{i\in I}$ be a non-empty family of discrete quantum groups. Then one can define product $\prod_{i\in I}\wh\bbGamma_i$, which is a compact quantum group (\cite{Wang}, see also \cite[Section 7.2]{DKV_ApproxLCQG}). We will denote its discrete dual by $\bigoplus_{i\in I}\bbGamma_i$ and call it the direct sum of family $(\bbGamma_i)_{i\in I}$ (the name and notation is inspired by the classical case where $\prod_{i\in I} \Gamma_i$ is larger than $\bigoplus_{i\in I} \Gamma_i$ whenever $|I|=\infty$ and $|\Gamma_i|\ge 2$).

\begin{corollary}\label{cor1}
Let $(\bbGamma_i)_{i\in I}$ be a non-empty family of discrete quantum groups and let $\bbGamma=\bigoplus_{i\in I} \bbGamma_i$ be their direct sum. Then
\begin{equation}\label{eq9}
\Lambda_{cb}(\bbGamma)=
\prod_{i\in I} \Lambda_{cb}(\bbGamma_i)
\quad
\textnormal{and}\quad
\mc{Z}\Lambda_{cb}(\bbGamma)=
\prod_{i\in I}\mc{Z} \Lambda_{cb}(\bbGamma_i).
\end{equation}
\end{corollary}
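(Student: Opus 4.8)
The plan is to deduce Corollary \ref{cor1} from the finite case (Theorem \ref{thm2}) by a two-sided estimate, handling separately the easy inequality and the reduction to finitely many factors. I will write the argument only for $\Lambda_{cb}$, the central version being identical.

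\smallskip

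\textbf{The inequality $\ge$ (lower bound via finite subproducts).} First I would observe that for any finite subset $J\subseteq I$, the quantum group $\bbGamma_J=\bigoplus_{i\in J}\bbGamma_i$ is a ``quotient'' of $\bbGamma$ in the sense that $\Pol(\wh\bbGamma_J)=\bigodot_{i\in J}\Pol(\wh\bbGamma_i)$ sits inside $\Pol(\wh\bbGamma)$ as a Hopf $*$-subalgebra, and correspondingly $\mrm{c}_{00}(\bbGamma_J)$ embeds into $\mrm{c}_{00}(\bbGamma)$ (extend a finitely supported family by $0$ on the indices outside $J$, noting $\Irr(\wh\bbGamma)=\prod$-restricted over $\Irr(\wh\bbGamma_i)$, so a representation supported only on factors in $J$ makes sense). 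Restriction of a multiplier on $\bbGamma$ to this subalgebra is a multiplier on $\bbGamma_J$ with no larger CB norm (this is the standard ``restriction of CB multipliers to quantum subgroups'' fact; alternatively one argues directly with $\Theta^l$ and the conditional expectation coming from $h_{\wh\bbGamma}=\bigotimes h_{\wh\bbGamma_i}$). Hence $\Lambda_{cb}(\bbGamma_J)\le \Lambda_{cb}(\bbGamma)$ for every finite $J$, and by Theorem \ref{thm2} (iterated) $\prod_{i\in J}\Lambda_{cb}(\bbGamma_i)\le \Lambda_{cb}(\bbGamma)$. Taking the supremum over all finite $J\subseteq I$ gives $\prod_{i\in I}\Lambda_{cb}(\bbGamma_i)\le \Lambda_{cb}(\bbGamma)$ (with the usual convention that an infinite product of numbers $\ge 1$ is the supremum of its finite subproducts, possibly $+\infty$).

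\smallskip

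\textbf{The inequality $\le$ (upper bound, reduction to finitely many factors).} Here I would assume $C:=\prod_{i\in I}\Lambda_{cb}(\bbGamma_i)<+\infty$; then all but countably many $\Lambda_{cb}(\bbGamma_i)$ equal $1$, and for any $\eps>0$ there is a finite $J_0\subseteq I$ with $\prod_{i\in J_0}\Lambda_{cb}(\bbGamma_i)\le C\le (1+\eps)\prod_{i\in J_0}\Lambda_{cb}(\bbGamma_i)$ and $\prod_{i\notin J_0}\Lambda_{cb}(\bbGamma_i)\le 1+\eps$. The point is that a net witnessing $\Lambda_{cb}$ need only approximate the identity pointwise on $\Irr(\wh\bbGamma)$, and any $\alpha\in\Irr(\wh\bbGamma)$ involves only finitely many factors. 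So fix a finite $\emptyset\ne F\subseteq\Irr(\wh\bbGamma)$; it involves only indices in some finite $J\supseteq J_0$. For each $i\in J$ pick $a^{(i)}\in\mrm{c}_{00}(\bbGamma_i)$ with $\|a^{(i)}\|_{cb}\le\Lambda_{cb}(\bbGamma_i)+\eps'$ and $a^{(i)}$ close to $\I$ on the (finite) set of irreducibles of $\wh\bbGamma_i$ occurring in $F$; for each $i\notin J$ take $a^{(i)}=\I\in M(\mrm{c}_0(\bbGamma_i))$ (the unit multiplier, CB norm $1$). Then $a=\bigotimes_{i\in I}a^{(i)}$ makes sense as a multiplier on $\bbGamma$ (only finitely many tensor legs differ from the unit, so it lies in $M(\mrm{c}_0(\bbGamma))$ and, after truncating to $\mrm{c}_{00}$ using a central projection $p_{F'}$ for a suitable finite $F'\supseteq F$ as in Lemma \ref{lemma2}, in $\mrm{c}_{00}(\bbGamma)$), with $\|a\|_{cb}\le\prod_{i\in J}\|a^{(i)}\|_{cb}\le C(1+O(\eps))$ by multiplicativity of the CB multiplier norm under tensoring (\cite[Proposition 3.2]{FreslonPermanence}, or the argument in the $\le$ direction of Proposition \ref{prop2}), and $a$ is as close to $\I$ on $F$ as we wish. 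Running $F$ and $\eps$ through a directed set produces a net in $\mrm{c}_{00}(\bbGamma)$ with CB norms $\le C$ converging pointwise to $\I$, i.e. $\Lambda_{cb}(\bbGamma)\le C$.

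\smallskip

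The main obstacle is purely bookkeeping: making precise the two embeddings $\mrm{c}_{00}(\bbGamma_J)\hookrightarrow\mrm{c}_{00}(\bbGamma)$ and $M(\mrm{c}_0(\bbGamma_i))\ni\I\mapsto$ (tensor leg) together with the claim that the CB multiplier norm does not increase under restriction to a subproduct and is multiplicative under adjoining unit legs. Once one grants that the infinite product $\prod_{i\in I}\wh\bbGamma_i$ has $\Irr$, $\Pol$, and $h$ given by the obvious (restricted) tensor constructions — which is exactly the content of \cite{Wang} recalled in the paragraph preceding the corollary — each of these facts reduces to the finite case already handled, and the convention defining $\prod_{i\in I}$ of constants $\ge 1$ closes the circle. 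The central case is verbatim the same, replacing $\mrm{c}_{00}$ by $\mc{Z}\mrm{c}_{00}$ everywhere and noting that all the multipliers constructed above are central whenever the factors are.
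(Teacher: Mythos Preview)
Your overall strategy is sound and matches the paper's \emph{alternative} proof (the second paragraph of the paper's argument): lower bound via finite subproducts and Theorem~\ref{thm2}, upper bound by tensoring approximating multipliers. The paper's \emph{primary} proof is shorter: it invokes \cite[Proposition~3.6]{FreslonPermanence}, which says that for a direct limit $\bbGamma=\varinjlim_F\bbGamma_F$ one has $\Lambda_{cb}(\bbGamma)=\sup_F\Lambda_{cb}(\bbGamma_F)$, and then both inequalities come out at once from Theorem~\ref{thm2}. Your direct approach is perfectly legitimate, just longer.

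There is, however, a genuine gap in your upper-bound construction. Taking $a^{(i)}=\I$ for $i\notin J$ gives an element $a=\bigotimes_i a^{(i)}$ whose support in $\Irr(\wh\bbGamma)$ is $\bigl(\prod_{i\in J}\supp a^{(i)}\bigr)\times\prod_{i\notin J}\Irr(\wh\bbGamma_i)$, which is infinite unless the extra factors are finite. Your proposed fix --- ``truncate to $\mrm{c}_{00}$ using a central projection $p_{F'}$ as in Lemma~\ref{lemma2}'' --- does not work: Lemma~\ref{lemma2} does not truncate supports (it corrects $a$ to equal $\I$ exactly on $F$), and multiplying by $p_{F'}$ for a finite $F'\subseteq\Irr(\wh\bbGamma)$ can inflate the CB norm, since $\|p_{F'}\|_{cb}$ need not be $1$ (cf.\ Lemma~\ref{lemma1} and the estimate $\|\Theta^l(p_F)\|\le\|p_F\|_{\A(\bbGamma)}$ used in the proof of Lemma~\ref{lemma2}).

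The repair is simple: for $i\notin J$ take $a^{(i)}=p_e^{(i)}$, the central projection onto the trivial representation. Then $\Theta^l(p_e^{(i)})=h_i(\cdot)\I$ is unital completely positive, hence $\|p_e^{(i)}\|_{cb}=1$, and $a=\bigotimes_{i\in J}a^{(i)}\otimes\bigotimes_{i\notin J}p_e^{(i)}$ is supported on the \emph{finite} set $\{\gamma:\gamma_i\in\supp a^{(i)}\text{ for }i\in J,\ \gamma_i=e\text{ for }i\notin J\}$, so $a\in\mrm{c}_{00}(\bbGamma)$ with $\|a\|_{cb}\le\prod_{i\in J}\|a^{(i)}\|_{cb}\le\prod_{i\in J}\Lambda_{cb}(\bbGamma_i)\le C$. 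Since every $\gamma\in F$ has $\gamma_i=e$ for $i\notin J$, the approximation $a_\gamma\approx\I_\gamma$ is unaffected. With this change (and the analogous one for the central version, where $p_e$ is already central), your argument goes through.
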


\begin{proof}
If $I$ is finite, then the claim follows immediately from Theorem \ref{thm2}; assume that $|I|=\infty$. Discrete quantum group $\bbGamma$ is the direct limit of system $(\oplus_{i\in F} \bbGamma_i)_{F}$ indexed by finite non-empty subsets $F\subseteq I$ with the canonical injective maps $\mrm{C}(\prod_{i\in F}\wh\bbGamma_i)\ni x \mapsto x\otimes (\otimes_{i\in F'\setminus F}\I_i)\in \mrm{C}(\prod_{i\in F'} \wh\bbGamma_i)$ for $F\subseteq F'$. Using Theorem \ref{thm2} and \cite[Proposition 3.6]{FreslonPermanence} we have
\[
\Lambda_{cb}(\bbGamma)=\sup_F \Lambda_{cb} \bigl(\bigoplus_{i\in F} \bbGamma_i\bigr)=
\sup_F \prod_{i\in F} \Lambda_{cb} ( \bbGamma_i)=
\prod_{i\in I}\Lambda_{cb}(\bbGamma_i)
\]
(recall $\Lambda_{cb}(\bbGamma_i)\ge 1$). One easily sees that \cite[Proposition 3.6]{FreslonPermanence} holds also for the central Cowling-Haagerup constant, which gives the second equality in \eqref{eq9}.

Alternatively, one can prove both equalities \eqref{eq9} as follows. Lower bounds follow from Theorem \ref{thm2} and decomposition $\bigoplus_{i\in I} \bbGamma_i=\bigl(\bigoplus_{i\in F} \bbGamma_i\bigr)\times \bigl(\bigoplus_{i\in I\setminus F} \bbGamma_i\bigr)$ which holds for all finite $\emptyset\neq F\subseteq I$. Upper bounds $\le $ in \eqref{eq9} can be directly showed as in the first paragraph of the proof of Proposition \ref{prop2}.
\end{proof}

We end with an example, which shows that knowing the exact value of Cowling-Haagerup constant (not just an upper and lower bound), can make a significant difference.

\begin{example}\label{ex1}
Let $(\bbGamma_n)_{n\in\NN}$ be a sequence of discrete quantum groups, such that $\Lambda_{cb}(\bbGamma_n)<+\infty$ for all $n\in\NN$ and $\liminf_{n\in\NN}\Lambda_{cb}(\bbGamma_n)>1$. Define $\bbGamma=\bigoplus_{n=1}^{\infty} \bbGamma_n$. Then using Corollary \ref{cor1} we calculate $\Lambda_{cb}(\bbGamma)=\prod_{n=1}^{\infty} \Lambda_{cb}(\bbGamma_n)=\infty$, hence $\bbGamma$ is not weakly amenable. Note that we wouldn't be able to conclude this knowing only $\Lambda_{cb}(\bbGamma_n\times \bbGamma_m)\ge \max(\Lambda_{cb}(\bbGamma_n),\Lambda_{cb}(\bbGamma_m))$. Since weak amenability implies Haagerup-Kraus approximation property AP (\cite[Proposition 5.7]{DKV_ApproxLCQG}), all quantum groups $\bbGamma_n$ have AP and so does $\bbGamma$ (\cite[Proposition 7.5]{DKV_ApproxLCQG}).
\end{example}

\section{Acknowledgements}

I would like to express my gratitute to Matt Daws and Christian Voigt for discussing topics related to approximation properties of quantum groups. This work was partially supported by FWO grant 1246624N. 

\bibliographystyle{plain}
\bibliography{bibliography}

\begin{thebibliography}{10}

\bibitem{bmt}
E.~B{\'e}dos, G.~J. Murphy, and L.~Tuset.
\newblock Co-amenability of compact quantum groups.
\newblock {\em J. Geom. Phys.}, 40(2):130--153, 2001.

\bibitem{BlecherMerdy}
D.~P. Blecher and C.~Le~Merdy.
\newblock {\em Operator algebras and their modules---an operator space
  approach}, volume~30 of {\em London Mathematical Society Monographs. New
  Series}.
\newblock The Clarendon Press, Oxford University Press, Oxford, 2004.
\newblock Oxford Science Publications.

\bibitem{Brannan}
M.~Brannan.
\newblock Approximation properties for locally compact quantum groups.
\newblock In {\em Topological quantum groups}, volume 111 of {\em Banach Center
  Publ.}, pages 185--232. Polish Acad. Sci. Inst. Math., Warsaw, 2017.

\bibitem{BrownOzawa}
N.~P. Brown and N.~Ozawa.
\newblock {\em $C^*$-Algebras and Finite-Dimensional Approximations}.
\newblock Graduate Studies in Mathematics, Volume 88. American Mathematical
  Society, 2008.

\bibitem{Caspers}
M.~Caspers.
\newblock Weak amenability of locally compact quantum groups and approximation
  properties of extended quantum {$SU(1,1)$}.
\newblock {\em Comm. Math. Phys.}, 331(3):1041--1069, 2014.

\bibitem{CowlingHaagerup}
M.~Cowling and U.~Haagerup.
\newblock Completely bounded multipliers of the {F}ourier algebra of a simple
  {L}ie group of real rank one.
\newblock {\em Invent. Math.}, 96(3):507--549, 1989.

\bibitem{Crann}
J.~Crann.
\newblock Amenability and covariant injectivity of locally compact quantum
  groups {II}.
\newblock {\em Canad. J. Math.}, 69(5):1064--1086, 2017.

\bibitem{CrannInner}
J.~Crann.
\newblock Inner amenability and approximation properties of locally compact
  quantum groups.
\newblock {\em Indiana Univ. Math. J.}, 68(6):1721--1766, 2019.

\bibitem{Averaging}
M.~{Daws}, J.~{Krajczok}, and C.~{Voigt}.
\newblock {Averaging multipliers on locally compact quantum groups}.
\newblock {\em arXiv e-prints}, page arXiv:2312.13626, December 2023.

\bibitem{DKV_ApproxLCQG}
M.~Daws, J.~Krajczok, and C.~Voigt.
\newblock The approximation property for locally compact quantum groups.
\newblock {\em Adv. Math.}, 438:Paper No. 109452, 2024.

\bibitem{CCAP}
K.~De~Commer, A.~Freslon, and M.~Yamashita.
\newblock C{CAP} for universal discrete quantum groups.
\newblock {\em Comm. Math. Phys.}, 331(2):677--701, 2014.
\newblock With an appendix by Stefaan Vaes.

\bibitem{EffrosRuan}
E.~G. Effros and Z.-J. Ruan.
\newblock {\em Operator spaces}, volume~23 of {\em London Mathematical Society
  Monographs. New Series}.
\newblock The Clarendon Press, Oxford University Press, New York, 2000.

\bibitem{FreslonFreeProducts}
A.~Freslon.
\newblock A note on weak amenability for free products of discrete quantum
  groups.
\newblock {\em C. R. Math. Acad. Sci. Paris}, 350(7-8):403--406, 2012.

\bibitem{Freslon}
A.~Freslon.
\newblock Examples of weakly amenable discrete quantum groups.
\newblock {\em J. Funct. Anal.}, 265(9):2164--2187, 2013.

\bibitem{FreslonPermanence}
A.~Freslon.
\newblock Permanence of approximation properties for discrete quantum groups.
\newblock {\em Ann. Inst. Fourier (Grenoble)}, 65(4):1437--1467, 2015.

\bibitem{Haagerup}
U.~Haagerup.
\newblock Group {$C^*$}-algebras without the completely bounded approximation
  property.
\newblock {\em J. Lie Theory}, 26(3):861--887, 2016.

\bibitem{JungeNeufangRuan}
M.~Junge, M.~Neufang, and Z.-J. Ruan.
\newblock A representation theorem for locally compact quantum groups.
\newblock {\em Internat. J. Math.}, 20(3):377--400, 2009.

\bibitem{KrajczokPhD}
J.~Krajczok.
\newblock {\em Modular properties of locally compact quantum groups}.
\newblock PhD thesis, IMPAN, 2022.

\bibitem{KrausRuan}
J.~Kraus and Z.-J. Ruan.
\newblock Approximation properties for {K}ac algebras.
\newblock {\em Indiana Univ. Math. J.}, 48(2):469--535, 1999.

\bibitem{KustermansVaesVN}
J.~Kustermans and S.~Vaes.
\newblock Locally compact quantum groups in the von {N}eumann algebraic
  setting.
\newblock {\em Math. Scand.}, 92(1):68--92, 2003.

\bibitem{NeshveyevTuset}
S.~Neshveyev and L.~Tuset.
\newblock {\em Compact quantum groups and their representation categories},
  volume~20 of {\em Cours Sp\'{e}cialis\'{e}s [Specialized Courses]}.
\newblock Soci\'{e}t\'{e} Math\'{e}matique de France, Paris, 2013.

\bibitem{PodlesWoronowicz}
P.~Podle\'{s} and S.~L. Woronowicz.
\newblock Quantum deformation of {L}orentz group.
\newblock {\em Comm. Math. Phys.}, 130(2):381--431, 1990.

\bibitem{SoltanViselter}
P.~M. So{\l}tan and A.~Viselter.
\newblock A note on amenability of locally compact quantum groups.
\newblock {\em Canad. Math. Bull.}, 57(2):424--430, 2014.

\bibitem{Tomatsu}
R.~Tomatsu.
\newblock Amenable discrete quantum groups.
\newblock {\em J. Math. Soc. Japan}, 58(4):949--964, 2006.

\bibitem{VanDaeleHaar}
A.~Van~Daele.
\newblock The {H}aar measure on a compact quantum group.
\newblock {\em Proc. Amer. Math. Soc.}, 123(10):3125--3128, 1995.

\bibitem{Wang}
S.~Wang.
\newblock Tensor products and crossed products of compact quantum groups.
\newblock {\em Proc. London Math. Soc. (3)}, 71(3):695--720, 1995.

\bibitem{cqg}
S.~L. Woronowicz.
\newblock Compact quantum groups.
\newblock In {\em Sym\'etries quantiques ({L}es {H}ouches, 1995)}, pages
  845--884. North-Holland, Amsterdam, 1998.

\end{thebibliography}

\end{document}